\documentclass{amsart}
\usepackage{bm}

\usepackage{amsmath}
\usepackage{amsthm}
\usepackage{amssymb}
\usepackage{enumitem}

\usepackage{mathrsfs}

\setlength{\hoffset}{0pt}
\setlength{\voffset}{-10pt}
\setlength{\oddsidemargin}{0pt}
\setlength{\evensidemargin}{0pt}
\setlength{\topmargin}{4truept}
\setlength{\headheight}{20truept}
\setlength{\headsep}{18truept}
\setlength{\textheight}{654truept}
\setlength{\textwidth}{453truept}
\setlength{\marginparsep}{18truept}
\setlength{\marginparwidth}{18truept}

\newcommand{\R}{\mathbb{R}}
\newcommand{\C}{\mathbb{C}}
\newcommand{\N}{\mathbb{N}}
\newcommand{\lr}[1]{\langle #1 \rangle}
\newcommand{\eps}{\varepsilon}
\newcommand{\norm}[1]{\| #1 \|}
\newcommand{\Norm}[1]{\left\| #1\right\|}

\newtheorem{thm}{Theorem}[section]
\newtheorem{prop}[thm]{Proposition}
\newtheorem{dfn}[thm]{Definition}
\newtheorem{lem}[thm]{Lemma}
\newtheorem{cor}[thm]{Corollary}

\theoremstyle{remark}
\newtheorem{rmk}[thm]{Remark}

\DeclareMathOperator{\med}{med}
\DeclareMathOperator{\supp}{supp}
\DeclareMathOperator{\Id}{Id}

\allowdisplaybreaks[1]

\title[Random data Cauchy problem for NLS with derivative nonlinearity]{Random data Cauchy problem for the nonlinear Schr\"{o}dinger equation with derivative nonlinearity}

\author[H. Hirayama]{Hiroyuki Hirayama}
\address{Graduate School of Mathematics, Nagoya University, Chikusa-ku, Nagoya, 464-8602, Japan}
\email{m08035f@math.nagoya-u.ac.jp}

\author[M. Okamoto]{Mamoru Okamoto}
\address{Department of Mathematics, Institute of Engineering, Academic Assembly, Shinshu University, 4-17-1 Wakasato, Nagano City 380-8553, Japan}
\email{m\_okamoto@shinshu-u.ac.jp}
\date{}

\subjclass[2010]{35Q55}

\numberwithin{equation}{section}

\begin{document}
%\pagewiselinenumbers

\begin{abstract}
We consider the Cauchy problem for the nonlinear Schr\"{o}dinger equation with derivative nonlinearity $(i\partial _t + \Delta ) u= \pm \partial (\overline{u}^m)$ on $\R ^d$, $d \ge 1$, with random initial data, where $\partial$ is a first order derivative with respect to the spatial variable, for example a linear combination of  $\frac{\partial}{\partial x_1} , \, \dots , \, \frac{\partial}{\partial x_d}$ or $|\nabla |= \mathcal{F}^{-1}[|\xi | \mathcal{F}]$.
We prove that almost sure local in time well-posedness, small data global in time well-posedness and scattering hold in $H^s(\R ^d)$ with $s> \max \left( \frac{d-1}{d} s_c , \frac{s_c}{2}, s_c - \frac{d}{2(d+1)} \right)$ for $d+m \ge 5$, where $s$ is below the scaling critical regularity $s_c := \frac{d}{2}-\frac{1}{m-1}$.
\end{abstract}

\maketitle

\section{Introduction}

We consider the Cauchy problem for the nonlinear Schr\"{o}dinger equation with derivative nonlinearity:
\begin{equation}
\label{NLS}
\left\{
\begin{aligned}
& (i \partial _t + \Delta ) u = \pm \partial ( \overline{u}^m), \\
& u(0, \cdot ) = \phi .
\end{aligned}
\right.
\end{equation}
Here, $m$ is a positive integer, $u : \R \times \R ^d \rightarrow \C$ is an unknown function, $\phi : \R ^d \rightarrow \C$ is a given function, $\partial$ is a first order derivative with respect to the spatial variable, for example a linear combination of  $\frac{\partial}{\partial x_1} , \, \dots , \, \frac{\partial}{\partial x_d}$ or $|\nabla |= \mathcal{F}^{-1}[|\xi | \mathcal{F}]$.

The nonlinear Schr\"{o}dinger equation in \eqref{NLS} is invariant under the following transformation:
\[
u(t,x) \mapsto u_{\mu} (t,x) := \mu ^{-\frac{1}{m-1}} u \Big( \frac{t}{\mu ^2} , \frac{x}{\mu} \Big)
\]
for $\mu >0$.
A simple calculation shows
\begin{equation*}
\norm{u_{\mu}(0,\cdot )}_{\dot{H}^s} = \mu ^{-s + \frac{d}{2}-\frac{1}{m-1}} \norm{u(0,\cdot )}_{\dot{H}^s},
\end{equation*}
which implies that $s_c :=\frac{d}{2}-\frac{1}{m-1}$ is the scaling critical regularity.

We mention the previous and related results for \eqref{NLS}.
Gr\"unrock \cite{Gru} proved local in time well-posedness of \eqref{NLS} in $L^{2} (\R )$ when $m=2$ and in $H^{s}(\R ^d)$ for $s>s_{c}$ when $d\geq 1$, $d+m \geq 4$. 
The first author \cite{H14}, \cite{H13} proved that \eqref{NLS} is small data global well-posedness and scattering for $s \ge s_c$ 
if $m+d\ge 4$.
Well-posedness of the Cauchy problem for \eqref{NLS} in $d=1$ whose $\partial (\overline{u}^m)$ is replaced by $\partial _x (|u|^2u)$ is intensively studied by many authors (see, for example, \cite{HO92}, \cite{Hay93}, \cite{Tak99}, \cite{Tak01}, \cite{CKSTT01}, \cite{BL01}, \cite{CKSTT02}, \cite{Her06}, \cite{Wu}, \cite{MO} and references therein).
Presence of derivative causes some difficulties.
In the present paper, we impose that the nonlinear part of \eqref{NLS} has special structure which cancels out the worst interaction.
Owing to this property, we can recover one derivative.

The above results are deterministic results.
We consider well-posedness of \eqref{NLS} with randomized initial data.
Following the papers \cite{BOP1}, \cite{BOP2}, we define the randomization.
Let $\psi \in \mathcal{S}(\R ^d)$ satisfy
\[
\supp \psi \subset [-1,1]^d , \quad \sum _{n \in \mathbb{Z}^d} \psi (\xi -n) =1 \quad \text{for any $\xi \in \R ^d$}.
\]
Let $\{ g_n \}$ be a sequence of independent mean zero complex valued random variables on a probability space $(\Omega , \mathcal{F} ,P)$, where the real and imaginary parts of $g_n$ are independent and endowed with probability distributions $\mu _n^{(1)}$ and $\mu _n^{(2)}$.
Throughout this paper, we assume that there exists $c>0$ such that
\[
\left| \int _{\R} e^{\kappa x} d \mu _{n}^{(j)} (x) \right| \le e^{c \kappa ^2}
\]
for all $\kappa \in \R$, $n \in \mathbb{Z}^d$, $j=1,2$.
This condition is satisfied by the standard complex valued Gaussian random variables and the standard Bernoulli random variables.
We then define the Wiener randomization of $\phi$ by
\begin{equation} \label{eq:rand}
\phi ^{\omega} := \sum _{n \in \mathbb{Z}^d} g_n (\omega ) \psi (D-n) \phi .
\end{equation}

The randomization has no smoothing in terms of differentiability (\cite[Appendix B]{BT1}).
However, it improves the integrability (see for example Lemma 2.3 in \cite{BOP1}).
>From this point of view, the randomization makes the problem subcritical in some sense.
In the present paper, we focus on the case where the regularity is less than $s_c = \frac{d}{2}-\frac{1}{m-1}$ because well-posedness in $H^s(\R ^d)$ with $s \ge s_c$ holds in the deterministic setting.

\begin{thm} \label{thm:LWP}
Assume $d \ge 1$, $m \ge 2$, and $d+m \ge 5$.
Let $\max \left( \frac{d-1}{d} s_c, \frac{s_c}{2}, s_c - \frac{d}{2(d+1)} \right)<s< s_c$.
Given $\phi \in H^s(\R ^d)$, let $\phi ^{\omega}$ be its randomization defined by \eqref{eq:rand}.
Then, for almost all $\omega \in \Omega$, there exist $T_{\omega} >0$ and a unique solution $u$ to \eqref{NLS} with $u(0,x) = \phi ^{\omega}(x)$ in a space continuously embedded in
\[
S(t) \phi ^{\omega} + C((-T_{\omega},T_{\omega}) ; H^{s_c}(\R ^d)) \subset C((-T_{\omega},T_{\omega}) ; H^{s}(\R ^d)).
\]
More precisely, there exist $C, c>0$, $\gamma >0$ such that for each $0<T<1$, there exists $\Omega _{T} \subset \Omega$ with $P(\Omega _T) \ge 1- C \exp \left( - \frac{c}{T^{\gamma} \norm{\phi}_{H^s}^2} \right)$.
\end{thm}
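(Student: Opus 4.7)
The plan is the standard Bourgain--Burq--Tzvetkov decomposition used in \cite{BOP1},\cite{BOP2}: write the putative solution as $u = z + v$, where $z(t) := S(t)\phi^{\omega}$ is the randomized linear evolution (which lives only in $H^s$) and $v$ is the ``nonlinear remainder'', which we will show can be found in a function space at the critical regularity $H^{s_c}$. Substituting into \eqref{NLS}, we obtain the zero-data equation
\begin{equation*}
(i\partial_t + \Delta) v = \pm \partial\bigl(\overline{z+v}^{\,m}\bigr), \qquad v(0) = 0,
\end{equation*}
and the task is to solve this equation via a contraction mapping argument in the resolution space $X_T^{s_c}$ (the $U^p$/$V^p$-type space or $X^{s,b}$ space used in Hirayama's deterministic proof for $s = s_c$), for $T \in (0,1)$ chosen small in a way that depends on $\omega$.

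The key preparatory step is to establish probabilistic improved Strichartz estimates for $z$ on a ``good'' set $\Omega_T \subset \Omega$: using independence of the $\{g_n\}$ together with the sub-Gaussian moment bound, one derives, via Khintchine/hypercontractivity as in \cite[Lemma~2.3]{BOP1}, bounds of the form
\begin{equation*}
\bigl\|\lr{\nabla}^{s_c} z\bigr\|_{L^q_T L^r_x} \lesssim \lambda\, \|\phi\|_{H^s}
\end{equation*}
for a range of Strichartz pairs $(q,r)$ beyond the deterministic admissible ones, with probability at least $1 - C e^{-c \lambda^2 / \|\phi\|_{H^s}^2}$. Choosing $\lambda \sim T^{-\gamma/2} \|\phi\|_{H^s}$ and intersecting finitely many such events produces $\Omega_T$ with the claimed exponential measure bound.

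On $\Omega_T$, the Duhamel map is estimated by expanding the nonlinearity multinomially,
\begin{equation*}
\overline{(z+v)}^{\,m} = \sum_{k=0}^{m} \binom{m}{k} \bar{z}^{\,m-k} \bar{v}^{\,k},
\end{equation*}
and treating each resulting term. The purely deterministic term $\bar v^m$ is handled by Hirayama's nonlinear estimate at the critical level $s_c$ (available since $v \in H^{s_c}$ and $d+m \ge 5$). The mixed terms $\bar{z}^{m-k}\bar{v}^k$ with $1 \le k \le m-1$ are controlled by multilinear estimates in which the factors of $z$ are placed in the probabilistically improved Strichartz norms while the $v$ factors are placed in the deterministic resolution norm. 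The three thresholds appearing in the restriction on $s$ arise here: $\frac{s_c}{2}$ from the balance in the bilinear ($z,v$) interaction, $\frac{d-1}{d}s_c$ from the cost of reallocating one spatial derivative from the outer $\partial$ to an interior factor of $z$, and $s_c - \frac{d}{2(d+1)}$ from the use of an $L^{2(d+2)/d}$-type endpoint with randomization gain.

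The genuine obstacle is the pure-random source term $\partial(\bar z^m)$, which must be shown to lie almost surely in the dual resolution space with norm controlled by $T^{\theta}\|\phi\|_{H^s}^m$ for some $\theta>0$. Here, the regularity deficit $s_c - s$ must be compensated by the probabilistic integrability gain together with a Bernstein-type argument exploiting the unit-cube Fourier support of each summand $\psi(D-n)\phi$; this is precisely where the gap between $s$ and the three bounds becomes essential. Once this ``fully random'' bound is obtained, the remaining nonlinear estimates are standard perturbations, the contraction closes in a ball of $X_T^{s_c}$ of radius $\sim T^{\theta}$, and uniqueness and the continuous embedding into $S(t)\phi^{\omega} + C((-T_\omega,T_\omega); H^{s_c})$ follow. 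A final Borel--Cantelli-type argument over $T = 2^{-k}$ converts the quantitative probability lower bound $P(\Omega_T) \ge 1 - C\exp(-c T^{-\gamma}\|\phi\|_{H^s}^{-2})$ into the almost-sure existence of a positive-time interval $T_\omega$.
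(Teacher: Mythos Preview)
Your overall architecture---decompose $u = z + v$, place $v$ in the critical resolution space $Z_T^{s_c}$ built on $U^2_\Delta/V^2_\Delta$, control $z$ via probabilistic Strichartz bounds on a good set, expand the nonlinearity multinomially, and close a contraction---is exactly what the paper does. However, several of the specific assertions in your sketch are inaccurate and would mislead an implementation.

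First, the probabilistic Strichartz bound you state is wrong as written: randomization improves \emph{integrability}, not differentiability, so one only has $\|\lr{\nabla}^{s} z\|_{L^q_T L^r_x}$ under control, never $\|\lr{\nabla}^{s_c} z\|$. The paper defines the good event via $\|\lr{\nabla}^s S(t)\phi^\omega\|_{S^{m,L}_\delta} \le R$ for a finite collection of (possibly non-admissible in the spatial index) pairs, and the $s_c-s$ deficit is compensated inside the multilinear estimate by the integrability gain together with the modulation bound, not by putting extra derivatives on $z$ directly.

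Second, your diagnosis of where the three thresholds come from is incorrect, and in particular the ``genuine obstacle'' is \emph{not} the pure-random term $\partial(\bar z^m)$ when $m\ge 3$. In the paper, $\tfrac{d-1}{d}s_c$ arises from the mixed term with a single $v$ and $m-1$ copies of $z$ (Subsubcase~3-2-2, $l=1$), while $s_c-\tfrac{d}{2(d+1)}$ arises from the term with $m-1$ copies of $v$ and a single $z$ (Subsubcase~3-2-1, $l\ge 2$). The all-$z$ term only needs $s>\max(\tfrac{s_c}{2}, s_c-\tfrac12, \tfrac{d}{m}-\tfrac{1}{m-1})$, which is weaker. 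Your heuristic explanations (``reallocating one derivative'', ``$L^{2(d+2)/d}$ endpoint'') do not match the actual mechanism, which is a case analysis combining the modulation bound $\max_j |\tau_j+|\xi_j|^2| \gtrsim \max_j |\xi_j|^2$, the bilinear Strichartz refinement, and the improved integrability of $z$.

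Third, the way $T$ enters is different from what you describe. The paper does not tune $\lambda$ to $T$; instead the good event $E_R^{m,L}$ is defined with a $T$-independent threshold $R$, a factor $T^\delta$ is extracted inside the nonlinear estimate via H\"older in time (replacing $L^q_t$ by $L^{q/(1-\delta q)}_t$), and then $T$ is chosen as an explicit negative power of $R$. Substituting back into $P(\Omega\setminus E_R^{m,L}) \le C\exp(-cR^2/\|\phi\|_{H^s}^2)$ gives the stated bound with $\gamma>0$.
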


Theorem \ref{thm:LWP} says almost sure local in time well-posedness for \eqref{NLS}.
Namely, \eqref{NLS} possesses local strong solutions for a large class of functions in $H^s(\R ^d)$ with $s<s_c$.

We find a solution $u$ which is a perturbation of $e^{it\Delta} \phi ^{\omega}$.
The linear evolution for the randomized initial data has better integrability than that for the initial data (see Lemma \ref{lem:stcStr} below), but it remains $C((-T_{\omega}, T_{\omega}) ;H^s(\R ^d))$.
On the other hand, from the smoothing effect of the linear evolution and absence of resonance interaction, the difference $u - e^{it \Delta} \phi ^{\omega}$ belongs to $C((-T_{\omega}, T_{\omega}) ; H^{s_c}(\R ^d))$ even if $\phi \in H^s(\R )$ with $s<s_c$.

\begin{rmk}
The lower bound is equivalent to
\[
\max \left( \frac{d-1}{d} s_c, \frac{s_c}{2}, s_c - \frac{d}{2(d+1)} \right)
=
\begin{cases}
\frac{s_c}{2}, & \text{if } d=1, \\
\frac{d-1}{d} s_c, & \text{if } d\ge 2\ \text{and}\ m=2, 3, \\
s_c - \frac{d}{2(d+1)}, & \text{if } d\ge 2\ \text{and}\ m\ge 4.
\end{cases}
\]
\end{rmk}

Next, we focus on global existence of the solution with small initial data.

\begin{thm} \label{thm:GWP}
Assume $d \ge 1$, $m \ge 2$, and $d+m \ge 5$.
Given $\phi \in H^s(\R ^d)$, let $\phi^{\omega}$ be its randomization defined by \eqref{eq:rand}.
Then, for almost all $\omega \in \Omega$, there exists  $\eps (\omega ) >0$ such that for every $\eps \in (0, \eps (\omega ))$, there exists a global in time solution $u$ to \eqref{NLS} with $u(0,x) = \eps \phi ^{\omega}(x)$ in a space continuously embedded in $C(\R ; H^{s}(\R ^d))$.
Moreover, the solution is scattering in the following sense:
there exists $v_{\pm}^{\omega} \in H^{s_c}(\R ^d)$ such that
\[
\norm{u(t) - S(t) (\phi ^{\omega} + v_{\pm}^{\omega})}_{H^{s_c}} \rightarrow 0
\]
as $t \rightarrow \pm \infty$.
\end{thm}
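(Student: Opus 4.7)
The plan is to reduce Theorem \ref{thm:GWP} to the global-in-time analogue of the fixed-point argument underlying Theorem \ref{thm:LWP}. Write the candidate solution as $u(t) = S(t)(\eps\phi^\omega) + v(t)$, where $v$ solves the Duhamel equation
\[
v(t) = \mp i \int_0^t S(t-t') \, \partial \bigl( \overline{S(t')(\eps\phi^\omega) + v(t')} \bigr)^m \, dt', \qquad v(0) = 0.
\]
The structural point is that, although $\phi^\omega \in H^s(\R^d) \setminus H^{s_c}(\R^d)$, the nonlinear remainder $v$ will live in a resolution space $X^{s_c}(\R)$ of $U^2$-/$V^2$-type at the scaling-critical regularity, globally in time. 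The deterministic $m$-linear estimate
\[
\Bigl\| \partial \prod_{j=1}^m \overline{v_j} \Bigr\|_{N^{s_c}(\R)} \lesssim \prod_{j=1}^m \norm{v_j}_{X^{s_c}(\R)},
\]
which is the backbone of the deterministic small-data $H^{s_c}$ scattering theory of \cite{H14}, \cite{H13}, handles the purely nonlinear (all-$v$) contribution.

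Next I would establish global-in-time probabilistic Strichartz estimates (analogues of Lemma \ref{lem:stcStr} with $T=\infty$): outside an event of probability $\le C\exp(-c/(\eps\norm{\phi}_{H^s})^2)$, one has
\[
\norm{S(t)\phi^\omega}_{L^p_t L^q_x(\R\times \R^d)} \le M(\omega)\norm{\phi}_{H^s}
\]
for a range of $(p,q)$ unreachable by the deterministic Strichartz estimate at the $H^s$ level; the sub-Gaussian hypothesis on $\{g_n\}$ combined with Khintchine and Minkowski inequalities produces the required exponential tail. The core of the argument is then the mixed $m$-linear estimate: writing $z^\omega(t) := S(t)\phi^\omega$, whenever at least one factor in $\partial(\overline{\eps z^\omega + v})^m$ is the linear piece $z^\omega$, the mixed form must map into $N^{s_c}(\R)$ with a bound $C(\omega)\,\eps^{k}\norm{v}_{X^{s_c}}^{m-k}$ (where $k\ge 1$ is the number of $z^\omega$ factors), the regularity deficit $s_c - s$ and the derivative $\partial$ being absorbed by the probabilistic integrability gain on $z^\omega$. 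The lower bound $s > \max\bigl(\tfrac{d-1}{d}s_c,\tfrac{s_c}{2}, s_c - \tfrac{d}{2(d+1)}\bigr)$ is exactly what makes every such configuration closable in $N^{s_c}$.

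With these estimates in hand, on the full-measure event $\Omega_\ast := \{\omega : M(\omega) < \infty\}$ the Duhamel map is a contraction on a small ball of $X^{s_c}(\R)$ whenever $\eps < \eps(\omega) := c\, M(\omega)^{-\alpha}$ for an appropriate $\alpha > 0$; this yields a global solution $v \in C(\R; H^{s_c}(\R^d))$. Scattering follows in the usual way from boundedness of $v$ in the $U^2$-type norm on $\R$: the Duhamel integrand is absolutely convergent in $H^{s_c}$ as $t \to \pm\infty$, so one sets
\[
v_\pm^\omega := \mp i \int_0^{\pm\infty} S(-t')\,\partial \bigl(\overline{\eps z^\omega(t') + v(t')}\bigr)^m \, dt'
\]
and verifies $\norm{v(t) - S(t)v_\pm^\omega}_{H^{s_c}} \to 0$.

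\textbf{Main obstacle.} The main technical hurdle is the global-in-time multilinear estimate. Running the probabilistic Strichartz and $m$-linear estimates on all of $\R$ removes the small factor $T^\gamma$ that supplies smallness in Theorem \ref{thm:LWP}, so the contraction must be closed purely by the smallness of $\eps$. This forces delicate bookkeeping of $\eps$-homogeneity in every mixed configuration and puts real pressure on the Lebesgue exponents used in the probabilistic bounds; the three lower bounds on $s$ in the hypothesis correspond to three distinct extremal $m$-linear interactions that must simultaneously be closed globally in time.
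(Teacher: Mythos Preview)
Your proposal is correct and follows essentially the same route as the paper: the global-in-time mixed $m$-linear estimates you describe are exactly Lemma~\ref{lem:nonest} with $T=\infty$, and the contraction on a small ball in the $U^2_\Delta$-based space $Z^{s_c}_\infty$ (smallness supplied by $\eps$ rather than by $T^\gamma$), followed by scattering via the $U^2$ structure, is precisely the argument in Section~\ref{S:proof_WP}. One minor clarification: in the mixed configurations the derivative $\partial$ is not absorbed by the probabilistic integrability gain on $z^\omega$ but by the nonresonance modulation bound~\eqref{mod} combined with Lemma~\ref{lem:Q} (this is where the special structure of $\partial(\overline{u}^m)$ enters); the probabilistic Strichartz gain is used only to compensate the regularity deficit $s_c-s$.
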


The uniqueness holds in the space $Y^s$ defined by Definition \ref{def:fs} below, 
which is a subspace continuously embedded in $S(t) \phi ^{\omega} + C(\R ; H^{s_c}(\R ^d))$.

\begin{rmk}
Theorem \ref{thm:GWP} is a consequence of the following:
there exist $C, c>0$ and $\Omega _{\phi} \subset \Omega$ such that with the following properties:
\begin{enumerate}[label=(\alph*)]
\item $P(\Omega _{\phi}) \ge 1- C \exp \left( - \frac{c}{\norm{\phi}_{H^s}^2} \right)$.
\item For each $\omega \in \Omega$, there exists a (unique) global in time solution $u$ to \eqref{NLS} with $u(0,x) = \phi ^{\omega} (x)$ in the class
\[
S(t) \phi ^{\omega} + C(\R ; H^{s_c}(\R ^d)) \subset C(\R ; H^{s}(\R ^d)).
\]
\item For each $\omega \in \Omega _{\phi}$, there exists $v_{\pm}^{\omega} \in H^{s_c}(\R ^d)$ such that
\[
\norm{u(t) - S(t) (\phi ^{\omega} + v_{\pm}^{\omega})}_{H^{s_c}} \rightarrow 0
\]
as $t \rightarrow \pm \infty$.
\end{enumerate}
\end{rmk}

The nonlinear part of \eqref{NLS} excludes the resonance, which is the worst interaction.
In other words, if an output of the nonlinear interaction is on the characteristic curve, then the at least one of the inputs is off its curve (see \eqref{mod} below).
Therefore, by using the modulation estimate and the Fourier restriction norm, we can recover one derivative.
These are also useful in the randomized initial data setting.

The number $\alpha (d,m) := \max \left( \frac{d-1}{d} s_c, \frac{s_c}{2}, s_c - \frac{d}{2(d+1)} \right)$ satisfies
\begin{align*}
& \frac{1}{s_c} \times \alpha (d,m) = \max \left( 1-\frac{1}{d}, \frac{1}{2}, 1-\frac{(m-1)d}{(d+1) ((m-1)d-2)} \right)  \rightarrow 1, \\
& s_c - \alpha (d,m) = \min \left( \frac{(m-1)d-2}{2(m-1)d}, \frac{(m-1)d-2}{4(m-1)}, \frac{d}{2(d+1)} \right) \rightarrow \frac{1}{2}
\end{align*}
as $d \rightarrow \infty$.
On the other hand, B\'{e}nyi, Oh, and Pocovnicu \cite{BOP2} showed that the cubic nonlinear Schr\"{o}dinger equation without derivative is almost sure well-posed in $H^s(\R ^d)$ with $s> \frac{d-1}{d+1} \cdot \frac{d-2}{2}$ and $d \ge 3$, where $\frac{d-2}{2}$ is the scaling critical regularity.
Here, we note that
\[
\frac{2}{d-2} \times \frac{d-1}{d+1} \cdot \frac{d-2}{2} = \frac{d-1}{d+1} \rightarrow 1, \quad
\frac{d-2}{2} - \frac{d-1}{d+1} \cdot \frac{d-2}{2} = \frac{d-2}{d+1} \rightarrow 1 .
\]
This difference comes from the fact that we rely on not only the bilinear refinement of the Strichartz estimates but also the modulation bound.

We obtain the almost sure well-posedness in $d \ge 2$ if $m \ge 3$, although the result of B\'{e}nyi et. al. is required $d \ge 3$.
One reason for this is that the scaling critical regularity of \eqref{NLS} is bigger than that of the cubic nonlinear Schr\"{o}dinger equation without derivative.
More precisely, the scaling critical regularity is zero if $d=1$, $m=3$ in our case, while the scaling critical regularity is zero if $d=2$ in the cubic nonlinear Schr\"{o}dinger equation without derivative.
Indeed, since the randomization does not improve regularity, we can not expect that almost sure well-posedness holds in the Sobolev space with negative regularity.
>From the same reason, we need the condition $d+m \ge 5$ in Theorems \ref{thm:LWP} and \ref{thm:GWP}.

Put $\mathcal{N}_{m} (u) = \partial (\overline{u}^{m})$.
Let $z(t) = z^{\omega} (t) := S(t) \phi ^{\omega}$ and $v(t) = u(t) - z(t)$ be the linear and nonlinear parts of $u$ respectively.
As in \cite{BOP2}, we consider the following perturbed equation:
\begin{equation*}
\left\{
\begin{aligned}
& (i\partial _t + \Delta ) v = \pm \mathcal{N}_{m}(v+z) , \\
& v(0,x) =0 .
\end{aligned}
\right.
\end{equation*}
In the previous results of B\'{e}nyi et. al. \cite{BOP2} and the authors \cite{HO15}, the lower bound of $s$ comes from a nonlinearity part which only consists of the linear evolution of the probabilistic initial data.
However, the lower bound in Theorems \ref{thm:LWP} and \ref{thm:GWP} appears in different nonlinear parts when $m \ge 3$.
More precisely, $\frac{d-1}{d} s_c$ and $s_c-\frac{d}{2(d+1)}$ are need to estimate $\mathcal{N}_m(z \cdots zv)$ and $\mathcal{N}_m(v \cdots vz)$ respectively.
Hence, $v$, which has more regularity than $z$, behaves like a bad part for $d\ge 2$ and $m \ge 4$.

We now give a brief outline of this article.
In Section \ref{S:prop_lem}, we collect lemmas which are used in the proof of our main results.
In Section \ref{S:funct_sp}, we define the function spaces and show these properties.
In Section \ref{S:nonlinear}, we show that the nonlinear estimates, which play a crucial role in the proof of our main results.
In Section \ref{S:proof_WP}, we give a proof of almost sure well-posedness results, Theorems \ref{thm:LWP} and \ref{thm:GWP}.

\section{The probabilistic lemmas} \label{S:prop_lem}

Firstly, we recall the probabilistic estimate.
The randomization keeps differentiability of the function.

\begin{lem}[\cite{BOP1}] \label{lem:randiff}
Given $\phi \in H^s (\R ^d)$, let $\phi ^{\omega}$ be its randomization defined by \eqref{eq:rand}.
Then, there exist $C,c >0$ such that
\[
P( \norm{\phi ^{\omega}}_{H^s} > \lambda ) < C \exp \left( - c \frac{\lambda ^2}{\norm{\phi}_{H^s}^2} \right)
\]
for all $\lambda >0$.
\end{lem}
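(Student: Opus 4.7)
The plan is to derive the sub-Gaussian tail bound by a Chernoff argument, which reduces to controlling the moments $\mathbb{E}\norm{\phi^\omega}_{H^s}^p$ for $p \ge 2$ by $C^p p^{p/2} \norm{\phi}_{H^s}^p$. The two ingredients are a Khintchine-type inequality for the $g_n$, supplied by the sub-Gaussian moment generating function hypothesis, together with the almost-orthogonality of the frequency projectors $\psi(D-n)$.

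Concretely, I would first rewrite the randomization on the Fourier side. Setting $f_n(\xi) := \psi(\xi - n)\hat\phi(\xi)$, one has $\widehat{\phi^\omega}(\xi) = \sum_n g_n(\omega) f_n(\xi)$, and since $\supp \psi \subset [-1,1]^d$ only finitely many $f_n(\xi)$ are nonzero at each $\xi$, giving $\sum_n |f_n(\xi)|^2 \le C|\hat\phi(\xi)|^2$. Next, I would deduce from the hypothesis $\bigl|\int e^{\kappa x} d\mu_n^{(j)}(x)\bigr| \le e^{c\kappa^2}$, by splitting the $g_n$ into real and imaginary parts and computing the joint moment generating function via independence, the pointwise Khintchine bound
\[
\Bigl\| \sum_n a_n g_n \Bigr\|_{L^p(\Omega)} \le C\sqrt{p}\,\bigl(\textstyle\sum_n |a_n|^2\bigr)^{1/2}
\]
for every $p \ge 2$ and every $\{a_n\} \subset \C$ with $\sum_n |a_n|^2 < \infty$. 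This follows by optimizing the Markov bound applied to $\exp\bigl(\kappa\,\mathrm{Re}\sum_n a_n g_n\bigr)$.

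Combining these via Minkowski's integral inequality (which requires $p \ge 2$) yields
\[
\bigl(\mathbb{E} \norm{\phi^\omega}_{H^s}^p\bigr)^{1/p} \le \Bigl( \int_{\R^d} \lr{\xi}^{2s} \Bigl\| \sum_n g_n f_n(\xi) \Bigr\|_{L^p_\omega}^2 d\xi \Bigr)^{1/2} \le C\sqrt{p}\,\norm{\phi}_{H^s}.
\]
Then Markov's inequality gives $P(\norm{\phi^\omega}_{H^s} > \lambda) \le \bigl(C\sqrt{p}\,\norm{\phi}_{H^s}/\lambda\bigr)^p$, and optimizing by choosing $p \sim \lambda^2/\norm{\phi}_{H^s}^2$ (absorbing into the constant the case where the optimal $p$ falls below $2$, for which the statement is trivial) produces the claimed sub-Gaussian tail bound.

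There is no substantial obstacle here; the only points requiring care are the derivation of the Khintchine step from the moment generating function hypothesis and the restriction $p \ge 2$ in the Minkowski interchange, both of which are entirely standard. The structure of the argument is the usual one used in the references \cite{BOP1}, \cite{BT1} for Wiener randomizations.
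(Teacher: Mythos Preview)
Your proof is correct and follows exactly the standard argument from \cite{BOP1} (and \cite{BT1}) that the paper cites: Khintchine via the sub-Gaussian moment-generating-function hypothesis, Minkowski to interchange $L^p_\omega$ and $L^2_\xi$, then Chebyshev with optimization over $p$. The paper itself gives no proof and simply defers to \cite{BOP1}, so there is nothing to compare against beyond confirming that your sketch matches the referenced argument.
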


Let $S(t) := e^{it \Delta}$ be the linear propagator of the Schr\"{o}dinger group,
Namely, $v(t) = S(t) \phi$ solves
\[
(i\partial _t + \Delta ) v= 0, \quad v(0) = \phi.
\]
We say that a pair $(q,r)$ is admissible if $2 \le q,r \le \infty$, $(q,r,d) \neq (2, \infty ,2)$, and
\[
\frac{2}{q} + \frac{d}{r} = \frac{d}{2}.
\]
The following Strichartz estimates hold.

\begin{prop} \label{prop:Str}
Let $(q,r)$ be admissible.
Then, we have
\[
\norm{S(t) \phi}_{L_t^q L_x^r} \lesssim \norm{\phi}_{L^2_x}.
\]
\end{prop}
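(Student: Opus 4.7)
The plan is to prove Proposition \ref{prop:Str} by the classical $TT^{\ast}$ argument of Ginibre--Velo, with the endpoint due to Keel--Tao. This is a standard result, so the task is mainly to recall the key ingredients and outline how they combine.

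First I would establish the \emph{dispersive estimate}: for $t\neq 0$,
\[
\Norm{S(t)\phi}_{L^{\infty}_x} \lesssim |t|^{-d/2}\Norm{\phi}_{L^{1}_x}.
\]
This follows from writing $S(t)\phi = K_t \ast \phi$ with the explicit kernel $K_t(x) = (4\pi i t)^{-d/2}e^{i|x|^2/(4t)}$, which gives the $L^{\infty}_x$ bound. Combined with the unitarity $\|S(t)\phi\|_{L^{2}_x}=\|\phi\|_{L^{2}_x}$, Riesz--Thorin interpolation yields
\[
\Norm{S(t)\phi}_{L^{r}_x} \lesssim |t|^{-d(\frac{1}{2}-\frac{1}{r})}\Norm{\phi}_{L^{r'}_x}
\]
for every $2 \le r \le \infty$, where $r'$ is the Hölder conjugate.

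Next I would apply the $TT^{\ast}$ method. The operator $T\phi := S(t)\phi$ has adjoint $T^{\ast}F := \int S(-s)F(s)\,ds$, and the Strichartz bound $\|T\phi\|_{L^{q}_t L^{r}_x}\lesssim \|\phi\|_{L^{2}_x}$ is equivalent, by duality, to
\[
\Norm{TT^{\ast}F}_{L^{q}_t L^{r}_x} \lesssim \Norm{F}_{L^{q'}_t L^{r'}_x},
\]
where $TT^{\ast}F(t) = \int S(t-s)F(s)\,ds$. Using the dispersive estimate pointwise in $t-s$, Hardy--Littlewood--Sobolev in the time variable with exponent $d(\frac{1}{2}-\frac{1}{r}) = \frac{2}{q}$ gives the desired inequality for every admissible non-endpoint pair $(q,r)$ with $q>2$.

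The main obstacle is the endpoint pair $(q,r)=(2,\frac{2d}{d-2})$ for $d\ge 3$, since at the borderline exponent $q=2$ the HLS step above just fails. Here I would invoke the Keel--Tao argument: dyadically decompose the time difference $|t-s|\sim 2^{j}$ and use the bilinear form
\[
T_j(F,G) := \iint_{|t-s|\sim 2^{j}} \lr{S(t-s)F(s),G(t)}_{L^{2}_x}\,ds\,dt,
\]
for which the dispersive and energy bounds give two off-diagonal estimates on a Banach space of exponent pairs. Real bilinear interpolation between these two estimates produces a summable geometric series in $j$ precisely at the endpoint, yielding the $L^{2}_t L^{2d/(d-2)}_x$ bound and excluding the forbidden case $(q,r,d)=(2,\infty,2)$.
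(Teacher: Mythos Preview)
Your outline is correct and is the standard argument for the Strichartz estimate. Note, however, that the paper does not actually prove Proposition~\ref{prop:Str}: it is stated as a known result (``The following Strichartz estimates hold'') and used without proof, so there is nothing to compare against beyond observing that your approach is the classical one the authors are implicitly citing.
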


By the randomization, improved Strichartz type estimates hold.

\begin{lem}[\cite{BOP1}] \label{lem:stcStr}
Given $\phi \in L^2(\R ^d)$, let $\phi ^{\omega}$ be its randomization defined by \eqref{eq:rand}.
Let $(q,r)$ be admissible with $q,r<\infty$ and $r \le \bar{r} < \infty$.
Then, there exist $C, c>0$ such that
\[
P(\norm{S(t) \phi ^{\omega}}_{L_t^q L_x^{\bar{r}}} > \lambda ) \le C \exp \left( -c \frac{\lambda ^2}{\norm{\phi}_{L^2_x}^2} \right)
\]
for all $\lambda >0$.
\end{lem}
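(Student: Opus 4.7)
My plan is to exploit the sub-Gaussian tails of the $g_n$ together with the almost orthogonality of the unit-cube frequency projections $\psi(D-n)\phi$. I would first bound the moments $\|S(t)\phi^\omega\|_{L^p_\omega L^q_t L^{\bar r}_x}$ uniformly in $p$, and then convert these to the stated probability tail via Chebyshev, optimizing the choice of $p$. The exponential-moment hypothesis $\bigl|\int_\R e^{\kappa x}\,d\mu_n^{(j)}(x)\bigr|\le e^{c\kappa^2}$ yields, by a standard argument, the sub-Gaussian Khinchin inequality
\[
\Big\|\sum_n g_n(\omega)\,F_n\Big\|_{L^p(\Omega)} \lesssim \sqrt{p}\,\Big(\sum_n |F_n|^2\Big)^{1/2}\qquad (p\ge 2).
\]

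Since $q,\bar r<\infty$, for $p\ge \max(q,\bar r)$ Minkowski's inequality moves $L^p(\Omega)$ past $L^q_t L^{\bar r}_x$, and a further Minkowski step (using $q,\bar r\ge 2$) moves it past the resulting $\ell^2_n$-norm, yielding
\[
\|S(t)\phi^\omega\|_{L^p_\omega L^q_t L^{\bar r}_x}
\;\lesssim\; \sqrt{p}\,\Big(\sum_n\|S(t)\psi(D-n)\phi\|_{L^q_t L^{\bar r}_x}^2\Big)^{1/2}.
\]
Each $\psi(D-n)\phi$ has Fourier support in a translate of $[-1,1]^d$, which $S(t)$ preserves, so Bernstein applied pointwise in $t$ together with $r\le \bar r$ gives $\|S(t)\psi(D-n)\phi\|_{L^{\bar r}_x}\lesssim \|S(t)\psi(D-n)\phi\|_{L^r_x}$ uniformly in $n$ and $t$. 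Combining with the deterministic Strichartz estimate for the admissible pair $(q,r)$ (Proposition \ref{prop:Str}) and the almost orthogonality $\sum_n\|\psi(D-n)\phi\|_{L^2}^2\lesssim \|\phi\|_{L^2}^2$ produces $\|S(t)\phi^\omega\|_{L^p_\omega L^q_t L^{\bar r}_x}\lesssim \sqrt{p}\,\|\phi\|_{L^2}$.

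Chebyshev's inequality then gives $P(\|S(t)\phi^\omega\|_{L^q_t L^{\bar r}_x}>\lambda)\le (C\sqrt{p}\,\|\phi\|_{L^2}/\lambda)^p$, and choosing $p\sim \lambda^2/\|\phi\|_{L^2}^2$ whenever this quantity is $\ge\max(q,\bar r,2)$ (and using the trivial bound $\le 1$ otherwise) produces the claimed Gaussian tail $\exp(-c\lambda^2/\|\phi\|_{L^2}^2)$. The only non-routine ingredient is the sub-Gaussian Khinchin inequality in the first step; everything else is a standard interplay of Bernstein, Strichartz, and Minkowski. The mechanism that lets randomization upgrade $r$ to any larger $\bar r$ essentially for free is precisely the Bernstein step, which is available because each summand has compact (unit-cube) frequency support; this is the one place where the \emph{randomized} object does something that the deterministic Strichartz inequality could not.
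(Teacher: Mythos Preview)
Your argument is correct and is exactly the standard proof given in \cite{BOP1}; the present paper does not supply its own proof of Lemma~\ref{lem:stcStr} but simply cites \cite{BOP1}, so there is nothing further to compare.
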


\section{Function spaces and their properties} \label{S:funct_sp}

\subsection{Definition of $U^{p},$ $V^{p}$ spaces}

In this section, we define the $U^{p}$- and $V^{p}$-type function spaces.
We refer the reader to \S 2 in \cite{HHK} for proofs of the basic properties.

Let $\mathcal{Z}$ be the set of finite partitions $-\infty <t_{0}<t_{1}<\dots <t_{K} \le \infty $ of the real line and we put $v(t_K):=0$ for all functions $v:\R\rightarrow L^2$ if $t_K=\infty$.

\begin{dfn}
Let $1\leq p<\infty .$ For $\{t_{k}\}_{k=0}^{K}\in \mathcal{Z}$ and $\{\phi _{k}\}_{k=0}^{K-1}\subset L^2 (\R ^d)$ with 
$\sum_{k=0}^{K-1} \norm{\phi _{k}}_{L^{2}}^{p}=1,$ we call the function $a: \R \rightarrow L^2 (\R ^d)$ given by 
\begin{equation*}
a=\sum_{k=1}^{K}\chi _{[t_{k-1},t_{k})}\phi _{k-1}
\end{equation*}
a $U^{p}$-atom. Furthermore, we define the atomic space 
\begin{equation*}
U^{p}:=\left\{ u: \R \rightarrow L^2 (\R ^d) : u=\sum_{j=1}^{\infty }\lambda _{j}a_{j}\ \text{for $U^{p}$-atoms $a_{j}$, $\{ \lambda _{j}\}\subset \C$ such that $\sum_{j=1}^{\infty }|\lambda _{j}|<\infty $}\right\}
\end{equation*}
with the norm 
\begin{equation*}
\norm{u} _{U^{p}}:=\inf \left\{ \sum_{j=1}^{\infty }|\lambda _{j}|:u=\sum_{j=1}^{\infty }\lambda _{j}a_{j}\ \text{for $U^{p}$-atoms $a_{j} $, $\{\lambda _{j}\}\subset \C$ such that $\sum_{j=1}^{\infty }|\lambda _{j}|<\infty $}\right\} .
\end{equation*}
\end{dfn}

\begin{dfn}
(i) Let $1\le p<\infty .$ We define $V^{p}$ as the space of all functions $v: \R \rightarrow L^2 (\R ^d)$ such that the limits $\lim _{t \to \pm \infty}v(t)$ exist in $L^2 (\R ^d)$ and 
the norm 
\begin{equation}
\norm{v}_{V^{p}}:=\sup_{\{t_{k}\}_{k=0}^{K}\in \mathcal{Z}}\left( \sum_{k=1}^{K}\Vert v(t_{k})-v(t_{k-1})\Vert _{L^{2}}^{p}\right) ^{1/p}
\label{def_Vp}
\end{equation}
is finite. 

(ii) Let $V_{-,rc}^{p}$ be the closed subspace of all $v\in V^{p}$ such that $v$ is right continuous and $\lim_{t\rightarrow -\infty }v(t)=0$, endowed with the norm (\ref{def_Vp}).
\end{dfn}

For $1\leq p<q<\infty $, $U^p \hookrightarrow V_{-,rc}^{p}\hookrightarrow U^{q} \hookrightarrow L_{t}^{\infty }(\R ;L_x^2 (\R ^d))$ is valid.

Using the $U^2$ and $V^2$ spaces instead of $H_t^{1/2+\eps} (\R ;L^2_x(\R ^d)) (\hookrightarrow C(\R ;L_x^2 (\R ^d)))$, we define the Fourier restriction norm spaces.

\begin{dfn}
(i) Let $1\leq p<\infty $.
We define the function spaces $U_{\Delta}^{p}:=S (t) U^{p}$ (resp., $V_{\Delta}^{p}:=S(t)V^{p}$) as
the spaces of all functions $u:\R \rightarrow L^2 (\R ^d)$ such that $t \rightarrow S \left( -t \right) u(t)$ is in $U^{p}$ (resp., $V^{p}$),
with the norms 
\begin{equation*}
\norm{u}_{U_{\Delta}^{p}}:=\norm{S(-\cdot )u} _{U^{p}}, \quad
\norm{v}_{V_{\Delta}^{p}}=\norm{S(-\cdot )v} _{V^{p}}.
\end{equation*}

(ii) The closed subspace $V_{-,rc,\Delta}^{p}$ is defined similarly.
\end{dfn}

The Strichartz estimates imply the following.

\begin{lem} \label{lem:Stv}
Let $d \ge 1$ and let $(q,r)$ be admissible with $q>2$.
Then, we have
\[
\norm{u}_{L_t^q L_x^r} \lesssim \norm{u}_{V^2_{\Delta}}.
\]
\end{lem}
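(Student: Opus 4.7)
The plan is to combine the transfer principle for the atomic space $U^q_\Delta$ with the standard embedding $V^2 \hookrightarrow U^q$ for $q>2$ recorded just before the definition of $U^p_\Delta$. Since the Strichartz bound in Proposition \ref{prop:Str} is an $L^2$-to-$L^q_t L^r_x$ estimate for the free evolution, what one naturally obtains at the atomic level is an $L^q_t L^r_x$ bound by $U^q_\Delta$, and the gap between $U^2_\Delta$ and $U^q_\Delta$ is precisely what the hypothesis $q>2$ allows us to bridge through $V^2_\Delta$.

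First, I would prove the auxiliary estimate
\[
\norm{u}_{L^q_t L^r_x} \lesssim \norm{u}_{U^q_\Delta}.
\]
Since the $L^q_t L^r_x$ norm is translation invariant under $S(-\cdot)$ only in the sense that $u \mapsto S(-t)u$ converts the problem to one about $U^q$, it is enough to test on a $U^q$-atom $a = \sum_{k=1}^{K} \chi_{[t_{k-1},t_k)} \phi_{k-1}$ with $\sum_k \norm{\phi_{k-1}}_{L^2}^q = 1$. Since the intervals $[t_{k-1}, t_k)$ are disjoint,
\[
\norm{S(\cdot)a}_{L^q_t L^r_x}^q = \sum_{k=1}^{K} \int_{t_{k-1}}^{t_k} \norm{S(t)\phi_{k-1}}_{L^r_x}^q\, dt \le \sum_{k=1}^{K} \norm{S(\cdot)\phi_{k-1}}_{L^q_t L^r_x}^q \lesssim \sum_{k=1}^{K} \norm{\phi_{k-1}}_{L^2}^q = 1,
\]
where the last inequality uses Proposition \ref{prop:Str}. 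Expanding an arbitrary $u \in U^q_\Delta$ as $\sum_j \lambda_j S(\cdot) a_j$ and applying the triangle inequality (valid because $q < \infty$ makes $L^q_t L^r_x$ normed) yields $\norm{u}_{L^q_t L^r_x} \lesssim \norm{u}_{U^q_\Delta}$.

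Second, I would invoke the embedding $V^2_{-,rc} \hookrightarrow U^q$ for $q>2$ stated in the text, which transfers under $S(-\cdot)$ to $V^2_{-,rc,\Delta} \hookrightarrow U^q_\Delta$. Combined with the previous step, this gives the estimate on $V^2_{-,rc,\Delta}$; for the full space $V^2_\Delta$ one either applies the embedding directly (the $V^p$-norm is insensitive to the right-continuity and vanishing-at-$-\infty$ conditions up to constants, since these only affect a null set of partition points) or restricts to $V^2_{-,rc,\Delta}$, which is the version actually used when setting up the Fourier restriction norm spaces.

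There is essentially no obstacle here: the only point requiring attention is the exponent mismatch between Strichartz's $L^2 \to L^q_t L^r_x$ bound and the desired target space $V^2_\Delta$. The atomic structure forces us through $U^q_\Delta$, and the strict inequality $q>2$ is exactly what is needed so that the embedding $V^2 \hookrightarrow U^q$ applies.
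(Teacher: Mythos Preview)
Your argument is correct and is precisely the standard transfer-principle proof the paper is implicitly invoking when it writes ``The Strichartz estimates imply the following'' without further details: pass from Proposition~\ref{prop:Str} to a $U^q_\Delta$ bound via atoms, then use the embedding $V^2_{-,rc}\hookrightarrow U^q$ for $q>2$ recorded after Definition~3.2. The only cosmetic point is that the paper states the embedding for $V^2_{-,rc}$ rather than the full $V^2$, but this is the version actually used throughout (the spaces $Y^s$ are built from right-continuous functions), so your closing remark handles it adequately.
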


We use the convention that capital letters denote dyadic numbers, e.g., $N=2^{n}$ for $n\in \mathbb{N}_{0}:=\mathbb{N}\cup \{ 0\}$.
We fix a nonnegative even function $\varphi \in C_{0}^{\infty }((-2,2))$ with $\varphi (r)=1$ for $|r|\leq 1$ and $\varphi \left( r\right) \leq 1$ for $1\le |r| \le 2$.
Set $\varphi _{N}(r):=\varphi (r/M) - \varphi (2r /N) $ for $N\geq 2$ and $\varphi _{1}(r):=\varphi (r)$.
For $N\in 2^{\mathbb{N}_{0}},$ $P_{N}$ denotes the Fourier multiplier with the symbol $\varphi _{N}(|\xi |),$ i.e. $(P_{N}f)(x):=\mathcal{F} ^{-1}[\varphi _{N}(|\xi |)\hat{f}(\xi)](x)$. 
Define $P_{>N}:=\sum_{M>N}P_{M}$ and $P_{\leq N}:=\Id-P_{>N}$.
Moreover, for $M\in 2^{\mathbb{N}_0},$ we define $\widetilde{Q _{M} f}(\tau ,\xi ):= \varphi _{M}(\tau +|\xi |^{2}) \tilde{f}(\tau ,\xi )$.
We also use $Q _{>M} :=\sum_{N>M} Q_{N}^{n}$ and $Q_{\le M} :=\Id -Q _{>M}^{n}.$

We state the boundedness of the operators $Q_{>M}$ and $Q_{\le M}$.

\begin{lem} \label{lem:Q}
Let $d\geq 1$, $2\le p \le \infty$, and $M\in 2^{\mathbb{N}_0}$.
Then the following estimates
\begin{equation*}
\norm{Q _{>M} f}_{L_t^p L_x^2} \lesssim M^{-1/p} \norm{f}_{V_{\Delta}^{2}}, \quad
\norm{Q _{\le M} f}_{V^2_{\Delta}} + \norm{Q _{>M} f}_{V^2_{\Delta}} \lesssim \norm{f}_{V_{\Delta}^{2}},
\end{equation*}
hold for any $f\in V_{\Delta}^{2},$ where the implicit constants are dependent only on $d.$
\end{lem}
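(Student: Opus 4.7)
The plan is to transfer the spacetime cutoffs via the linear evolution so that they become pure temporal Fourier multipliers, and then establish two scalar estimates on $V^{2}$-valued functions. Setting $g(t,x) := S(-t) f(t,x) \in V^{2}$, a direct spacetime Fourier computation using $\widetilde{S(-t)h}(\tau, \xi) = \tilde h(\tau - |\xi|^{2}, \xi)$ shows $\widetilde{S(-t) Q_{>M} f}(\tau, \xi) = \varphi_{>M}(\tau) \tilde g(\tau, \xi)$, and similarly for $Q_{\le M}$; that is, under the $L^{2}_{x}$-isometry $S(-t)$ the modulation cutoffs $Q_{\gtrless M}$ become pure time-Fourier cutoffs $P^{\tau}_{\gtrless M}$ with symbols depending only on $\tau$. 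The two claimed inequalities thereby reduce to
\[
\norm{P^{\tau}_{>M} g}_{L^{p}_{t} L^{2}_{x}} \lesssim M^{-1/p} \norm{g}_{V^{2}}, \qquad \norm{P^{\tau}_{\le M} g}_{V^{2}} + \norm{P^{\tau}_{>M} g}_{V^{2}} \lesssim \norm{g}_{V^{2}}.
\]

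For the $V^{2}$ boundedness I would observe that $P^{\tau}_{\le M}$ is convolution in $t$ with a kernel $K_{M}(t) := M \check\varphi(Mt)$ whose $L^{1}_{t}$-norm is independent of $M$. A direct Minkowski--Cauchy--Schwarz computation on any partition $\{t_{k}\}$,
\[
\sum_{k} \norm{(K_{M} \ast g)(t_{k}) - (K_{M} \ast g)(t_{k-1})}_{L^{2}_{x}}^{2} \le \norm{K_{M}}_{L^{1}} \int |K_{M}(s)| \sum_{k} \norm{g(t_{k}-s) - g(t_{k-1}-s)}_{L^{2}_{x}}^{2}\, ds \le \norm{K_{M}}_{L^{1}}^{2} \norm{g}_{V^{2}}^{2},
\]
yields $\norm{P^{\tau}_{\le M} g}_{V^{2}} \lesssim \norm{g}_{V^{2}}$; the estimate for $P^{\tau}_{>M} = \Id - P^{\tau}_{\le M}$ follows.

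For the $L^{p}_{t} L^{2}_{x}$ inequality, the $p=\infty$ endpoint is immediate from the previous step combined with $V^{2} \hookrightarrow L^{\infty}_{t} L^{2}_{x}$. The $p=2$ endpoint is the heart of the argument: from
\[
P^{\tau}_{>M} g(t) = g(t) - (K_{M} \ast g)(t) = \int K_{M}(s)\bigl(g(t) - g(t-s)\bigr)\, ds,
\]
Cauchy--Schwarz in $s$ with measure $|K_{M}(s)|\, ds$, integration in $t$, and Fubini reduce matters to the auxiliary inequality
\[
\int_{\R} \norm{g(t) - g(t-s)}_{L^{2}_{x}}^{2}\, dt \lesssim |s| \cdot \norm{g}_{V^{2}}^{2} \quad \text{for every } s \in \R.
\]
This in turn follows by partitioning $\R$ into intervals $I_{k} := [(k-1)|s|, k|s|)$, bounding $\norm{g(t)-g(t-s)}_{L^{2}_{x}} \lesssim \omega_{k-1} + \omega_{k}$ on $I_{k}$ with $\omega_{j} := \sup_{u, v \in I_{j}} \norm{g(u)-g(v)}_{L^{2}_{x}}$, and controlling $\sum_{k} \omega_{k}^{2} \lesssim \norm{g}_{V^{2}}^{2}$: for each $k$, pick near-extremizing $u_{k} < v_{k}$ in $I_{k}$, and the ordered concatenation $\ldots < u_{k-1} < v_{k-1} < u_{k} < v_{k} < \ldots$ is an admissible partition whose sum of squared successive differences bounds $\norm{g}_{V^{2}}^{2}$. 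Absorbing $\int |K_{M}(s)||s|\, ds \lesssim M^{-1}$ produces the $M^{-1/2}$ gain, and general $p \in [2,\infty]$ follows from the H\"older-type bound $\norm{h}_{L^{p}_{t} L^{2}_{x}}^{p} \le \norm{h}_{L^{\infty}_{t} L^{2}_{x}}^{p-2} \norm{h}_{L^{2}_{t} L^{2}_{x}}^{2}$.

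The main obstacle is the auxiliary estimate $\int \norm{g(t)-g(t-s)}_{L^{2}_{x}}^{2}\, dt \lesssim |s|\norm{g}_{V^{2}}^{2}$; once it is in hand, everything else is standard. A minor technical point is integrability at infinity: a generic $V^{2}$ function need not belong to $L^{2}_{t} L^{2}_{x}$, so the $p=2$ computation is first carried out on a dense subclass (say, functions with vanishing limits at $\pm\infty$), using that $P^{\tau}_{>M}$ annihilates constants, and then extended by density.
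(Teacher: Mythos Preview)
Your argument is correct. The paper does not actually prove this lemma; it is stated as a standard fact about $U^p/V^p$ spaces with a blanket reference to \cite{HHK}, so there is no in-paper proof to compare against. Your reduction via $g = S(-t)f$ to pure temporal multipliers, the $L^1$-kernel argument for $V^2$-boundedness, and the difference-quotient route to the $p=2$ endpoint are essentially the standard approach (equivalent to the embedding $V^2 \hookrightarrow \dot B^{1/2}_{2,\infty}$ used in the $U^p/V^p$ literature).

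One small remark: the density caveat at the end is both unnecessary and, as phrased, slightly off. Functions with vanishing limits at $\pm\infty$ are \emph{not} dense in $V^2$ (the $V^2$ norm controls the limits), so you cannot extend by density. Fortunately no density is needed: your auxiliary inequality $\int \|g(t)-g(t-s)\|_{L^2_x}^2\,dt \lesssim |s|\,\|g\|_{V^2}^2$ holds for \emph{every} $g\in V^2$, since $\sum_k \omega_k^2 \le \|g\|_{V^2}^2$ is always finite and on each $I_k$ the integrand is bounded by $(\omega_{k-1}+\omega_k)^2$. The identity $P^{\tau}_{>M}g = \int K_M(s)(g(t)-g(t-s))\,ds$ then makes sense for all $g\in V^2 \subset L^\infty_t L^2_x$, and the Cauchy--Schwarz/Fubini computation goes through directly. (Also, in the partition step you presumably mean the sum of squared successive differences is \emph{bounded by} $\|g\|_{V^2}^2$, not that it bounds $\|g\|_{V^2}^2$.)
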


The bilinear refinement of the Strichartz estimate holds (\cite{Bou98}, \cite{CDKS}, \cite{CKSTT}).

\begin{lem} \label{lem:biSt}
Let $d \ge 1$ and let $N_1, \, N_2, \, N_3 \in 2^{\N _0}$.
Assume $N_{\min}=\min (N_1,N_2,N_3) \ll N_{\max}=\max (N_1,N_2,N_3)$.
Then, we have
\[
\norm{P_{N_3}\left(S(t) P_{N_1} \phi _1 S(t) P_{N_2} \phi _2\right)}_{L^2_{t,x}} \lesssim N_{\min}^{\frac{d}{2}-1} \left( \frac{N_{\min}}{N_{\max}} \right) ^{\frac{1}{2}} \norm{\phi _1}_{L^2} \norm{\phi _2}_{L^2}
\]
for any $\phi _1, \, \phi _2 \in L^2(\R ^d)$.
\end{lem}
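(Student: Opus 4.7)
The plan is to reduce the estimate to a geometric problem on the paraboloid via Plancherel, then apply Cauchy--Schwarz together with a sphere-intersection bound.

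Setting $u_j(t,x) := S(t) P_{N_j} \phi_j(x)$, since $\widehat{u_j}(\tau,\xi) = c\,\delta(\tau + |\xi|^2) \varphi_{N_j}(|\xi|) \widehat{\phi_j}(\xi)$, the space-time Fourier transform of the product is
\[
\widehat{u_1 u_2}(\tau,\xi) = c \int_{\R^d} \delta\bigl(\tau + |\xi_1|^2 + |\xi-\xi_1|^2\bigr) \varphi_{N_1}(|\xi_1|) \varphi_{N_2}(|\xi-\xi_1|) \widehat{\phi_1}(\xi_1) \widehat{\phi_2}(\xi-\xi_1)\, d\xi_1.
\]
By Plancherel, $\norm{P_{N_3}(u_1 u_2)}_{L^2_{t,x}}^2 \sim \int \varphi_{N_3}(|\xi|)^2 |\widehat{u_1 u_2}(\tau,\xi)|^2\, d\tau\, d\xi$. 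Applying Cauchy--Schwarz to the $\xi_1$-integration with respect to the singular measure $\delta(\cdots) d\xi_1$, I bound the integrand pointwise by $I(\tau,\xi) \cdot K(\tau,\xi)$, where
\[
I(\tau,\xi) := \int \delta\bigl(\tau + |\xi_1|^2 + |\xi-\xi_1|^2\bigr) \chi_{\{|\xi_1|\sim N_1,\, |\xi-\xi_1|\sim N_2\}}\, d\xi_1,
\]
and $K$ is the same integral with the extra weight $|\widehat{\phi_1}(\xi_1)|^2 |\widehat{\phi_2}(\xi-\xi_1)|^2$.

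The core geometric estimate is $\sup_{\tau,\, |\xi|\sim N_3} I(\tau,\xi) \lesssim N_{\min}^{d-1}/N_{\max}$. Using the identity $|\xi_1|^2 + |\xi-\xi_1|^2 = 2|\xi_1 - \xi/2|^2 + |\xi|^2/2$, the $\delta$-constraint localizes $\xi_1$ to the sphere of radius $r = \sqrt{(-\tau - |\xi|^2/2)/2}$ centered at $\xi/2$, contributing a Jacobian factor $(4r)^{-1}$. By symmetry I may assume $N_1 \leq N_2$; in the generic case $N_1 \ll N_2$ one has $N_3 \sim N_2 \sim N_{\max}$ and $r \sim N_{\max}$, and the sphere meets the smaller annulus $\{|\xi_1| \sim N_1\}$ in a spherical cap of $(d-1)$-dimensional measure $\lesssim N_{\min}^{d-1}$, yielding $I \lesssim N_{\min}^{d-1}/N_{\max}$.

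Integrating $K$ first in $\tau$ eliminates the $\delta$, and a change of variable $\xi_2 = \xi - \xi_1$ gives $\iint K\, d\tau\, d\xi = \norm{\phi_1}_{L^2}^2 \norm{\phi_2}_{L^2}^2$, so that
\[
\norm{P_{N_3}(u_1 u_2)}_{L^2_{t,x}}^2 \lesssim \frac{N_{\min}^{d-1}}{N_{\max}} \norm{\phi_1}_{L^2}^2 \norm{\phi_2}_{L^2}^2,
\]
which is the claim after a square root. The main obstacle is the residual high-high-to-low configuration $N_3 \ll N_1 \sim N_2$: there the sphere of radius $r \sim N_{\max}$ meets both annuli along nearly a full cap, so the naive sphere bound merely yields $I \sim N_{\max}^{d-2}$, and one must exploit the outer constraint $|\xi| \sim N_3$ through a finer Fubini or duality argument to recover the stated bound in this regime.
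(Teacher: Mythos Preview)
The paper does not supply its own proof of this lemma; it simply cites \cite{Bou98}, \cite{CDKS}, \cite{CKSTT}. Your argument---Plancherel, Cauchy--Schwarz against the $\delta$-measure, and the sphere-cap bound for $I(\tau,\xi)$---is precisely the classical argument of Bourgain found in those references, and it is correct in the configuration $N_{\min}\in\{N_1,N_2\}$.

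The high-high-to-low case $N_3=N_{\min}\ll N_1\sim N_2$ is, as you correctly diagnose, left open: your sphere bound only gives $I\lesssim N_{\max}^{d-2}$, which is far too weak. This is a genuine gap, and ``a finer Fubini or duality argument'' is not yet a proof. The standard completion is a cube decomposition at scale $N_3$. Write $P_{N_1}\phi_1=\sum_k P_{Q_k}\phi_1$ over cubes $Q_k$ of sidelength $\sim N_3$; the output constraint $|\xi_1+\xi_2|\lesssim N_3$ forces $\xi_2$ into a cube $\widetilde{Q}_k\approx -Q_k$ (up to $O(1)$ neighbours), so the double sum is diagonal. For each pair your own computation now applies: the sphere of radius $r\sim N_{\max}$ meets $Q_k$ in a cap of $(d-1)$-measure $\lesssim N_3^{d-1}$, whence $I\lesssim N_3^{d-1}/N_{\max}$ and each piece obeys the desired bilinear bound. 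Summing over $k$ via the triangle inequality followed by Cauchy--Schwarz, and using $\ell^2$-orthogonality of the $P_{Q_k}\phi_1$ together with the bounded-to-one correspondence $k\mapsto\widetilde{Q}_k$, recovers the full estimate. Without this step the proof is incomplete.
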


Combining the interpolation (see Proposition 2.20 in \cite{HHK}) with it, we obtain the bilinear refinement of the Strichartz estimate in the $V^2$ space settings.

\begin{cor} \label{cor:biStv}
Let $d \ge 1$.
For any $(v,w) \in V_{-,rc, \Delta}^{2} \times V_{-,rc,\Delta}^{2}$, $N_1, \, N_2, \, N_3 \in 2^{\N _0}$ with $N_{\min} \ll N_{\max}$, and sufficiently small $\delta >0$, we have the estimate
\[
\norm{P_{N_3}\left(P_{N_1}v P_{N_2}w\right)}_{L^2_{t,x}} \lesssim N_{\min}^{\frac{d}{2}-1} \left( \frac{N_{\min}}{N_{\max}} \right) ^{\frac{1}{2}-\delta} \norm{v}_{V^2_{\Delta}} \norm{w}_{V^2_{\Delta}}
\]
where the implicit constants depending only on $d$.
\end{cor}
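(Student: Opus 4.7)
The plan is to upgrade Lemma \ref{lem:biSt} from pure linear evolutions to general $V^2_\Delta$ functions via a standard two-step transference: first to $U^2_\Delta \times U^2_\Delta$ by atomic decomposition, and then to $V^2_\Delta \times V^2_\Delta$ by the bilinear logarithmic interpolation of Proposition 2.20 in \cite{HHK}. The logarithmic loss incurred by the interpolation is harmless because $(1+\log(N_{\max}/N_{\min}))^C \lesssim_{\delta}(N_{\max}/N_{\min})^{\delta}$ for any $\delta>0$, which is exactly what produces the factor $(N_{\min}/N_{\max})^{1/2-\delta}$ as claimed.

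For the transference to $U^2_\Delta$, given $U^2_\Delta$-atoms $a,b$ with $S(-\cdot)a = \sum_\ell \chi_{[t_{\ell-1},t_\ell)}\phi_{\ell-1}$ and $S(-\cdot)b = \sum_k \chi_{[s_{k-1},s_k)}\psi_{k-1}$ satisfying $\sum_\ell \norm{\phi_{\ell-1}}_{L^2}^2 = \sum_k \norm{\psi_{k-1}}_{L^2}^2 = 1$, I would decompose the product along the common refinement $I_{\ell,k} := [t_{\ell-1},t_\ell) \cap [s_{k-1},s_k)$:
\[
P_{N_1}a(t)\cdot P_{N_2}b(t) = \sum_{\ell,k} \chi_{I_{\ell,k}}(t)\,S(t)P_{N_1}\phi_{\ell-1}\cdot S(t)P_{N_2}\psi_{k-1}.
\]
Since the intervals $I_{\ell,k}$ are pairwise disjoint in $t$, pointwise-in-time application of Lemma \ref{lem:biSt} on each piece (extending the time domain from $I_{\ell,k}$ to $\R$ only enlarges the bound) yields
\[
\norm{P_{N_3}(P_{N_1}a\cdot P_{N_2}b)}_{L^2_{t,x}}^2 \lesssim N_{\min}^{d-2}\frac{N_{\min}}{N_{\max}}\sum_{\ell,k}\norm{\phi_{\ell-1}}_{L^2}^2\norm{\psi_{k-1}}_{L^2}^2 \lesssim N_{\min}^{d-2}\frac{N_{\min}}{N_{\max}}.
\]
Bilinearity and the definition of the $U^2$-norm then promote this to the $U^2_\Delta \times U^2_\Delta$-estimate with constant $N_{\min}^{d/2-1}(N_{\min}/N_{\max})^{1/2}$.

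To invoke the interpolation, a coarser bilinear bound on $U^q_\Delta \times U^q_\Delta$ for some $q>2$ is also needed. I would obtain one of the form $C(N_1 N_2)^{\alpha(d)}\norm{v}_{V^2_\Delta}\norm{w}_{V^2_\Delta}$ for some $\alpha(d)\ge 0$ via Hölder's inequality combined with Lemma \ref{lem:Stv} (applied at an admissible pair) and Bernstein in the spatial frequency; the embedding $V^2_\Delta \hookrightarrow U^q_\Delta$ for $q>2$ then furnishes the required $U^q_\Delta\times U^q_\Delta$-bound. The precise value of $\alpha(d)$ is immaterial because its only role is to be eaten by the logarithm from the interpolation.

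The main obstacle is the bookkeeping in the transference step when the two atoms carry distinct time partitions; the key point is that the disjointness of $\{I_{\ell,k}\}$ converts the $L^2_{t,x}$-norm of the double sum into a square sum, matching the $\ell^2$-normalization of the atoms. Once the $U^2_\Delta$-bound is in hand, the passage to $V^2_\Delta$ is a direct invocation of the logarithmic bilinear interpolation machinery from \cite{HHK}.
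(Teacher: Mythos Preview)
Your proposal is correct and follows exactly the approach the paper indicates: the paper's one-line justification ``Combining the interpolation (see Proposition 2.20 in \cite{HHK}) with it'' is precisely the two-step transference you spell out --- atomic decomposition to pass from free solutions to $U^2_\Delta \times U^2_\Delta$, then the logarithmic bilinear interpolation of \cite{HHK} together with a crude $U^q_\Delta$ bound to reach $V^2_\Delta \times V^2_\Delta$, absorbing the logarithm into the $\delta$-loss. You have simply filled in the standard details that the paper leaves implicit.
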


\begin{rmk}
By the Strichartz estimate, the same estimate holds in the case $N_{\min} \sim N_{\max}$ except for $d=1$.
Hence, we neglect the condition $N_{\min} \ll N_{\max}$ if $d \ge 2$.
\end{rmk}

\begin{dfn} \label{def:fs}
For $s\in \mathbb{R}$, we define $Y^s$ and $Z^s$ as the closure of $C(\R ;\mathcal{S}(\R ^d))\cap V_{-,\Delta}^2$ and $C(\R ;\mathcal{S}(\R ^d)) \cap U_{\Delta}^2 $ with respect to the norm
\begin{equation*}
\norm{f}_{Y^s} :=\bigg( \sum_{N \in 2^{\N _0}}N^{2s} \Norm{P_{N}f}_{V_{\Delta}^2}^{2}\bigg) ^{1/2} , \quad
\norm{f}_{Z^s} :=\bigg( \sum_{N \in 2^{\N _0}}N^{2s} \Norm{P_{N}f}_{U_{\Delta}^2}^{2}\bigg) ^{1/2} ,
\end{equation*}
respectively.
\end{dfn}

We also use the time restricted space.

\begin{dfn} \label{def:time_tr}
Let $E$ be a Banach space of continuous functions $f: \R \rightarrow H$ for some Hilbert space $H$.
We define the corresponding restriction space to the interval $[0,T) \subset \R$ as
\begin{equation*}
E_{T}:=\{f\in C([0,T);H):\exists g^{\ast }\in E,\ g^{\ast }(t)=f(t),t\in [0,T)\}
\end{equation*}
endowed with the norm $\norm{f}_{E_{T}} :=\inf \{ \norm{g^{\ast }}_{E}:g^{\ast }(t)=f(t),t\in [0,T)\}$. 
\end{dfn}
The space $E_{T}$ is a Banach space.
For any $T \in ( 0,\infty ]$, we have the embeddings
\[
Z_T^s \hookrightarrow Y_T^s \hookrightarrow \langle \nabla \rangle ^{-s} V^2_{\Delta ,T} \cap C([0,T);H^s).
\]

Let $f\in L_{loc}^{1}\left( [0,\infty );L_x^2 (\R ^d) \right)$.
We define the integral operator
\begin{equation*}
\Gamma [f] (t) :=\int_{0}^{t} S ( t-t') f (t') dt' ,
\end{equation*}
for $t\ge 0$ and $\Gamma [f] (t) =0$ otherwise.
For the integral operator, we have the following.

\begin{prop} \label{prop:Duh}
Let $d\geq 1$, $s \in \R$, and $T\in ( 0,\infty ]$.
Then the estimate 
\begin{equation*}
\norm{\Gamma [f]}_{Z_{T}^{s}}
\le \left\{ \sum_{N \in 2^{\N _0}} N^{2s} \left( \sup_{\norm{g}_{V_{\Delta}^{2}=1}} \left| \int_{0}^{T} \lr{f(t),P_N g(t)}_{L_{x}^{2}} dt \right| \right) ^{2}\right\} ^{1/2}
\end{equation*}
holds for any $f\in L_t^1((0,T);H^{s}(\R ^d)),$ where the implicit constant is dependent only on $d,s.$
\end{prop}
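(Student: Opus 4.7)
The strategy is a two step reduction: first decompose $\Gamma[f]$ dyadically via Littlewood-Paley to split the $Z^s_T$ norm into independent pieces, and then bound each piece using the $U^2$-$V^2$ duality of Hadac-Herr-Koch. By the definition of $Z^s_T$, extending $f$ by zero outside $[0,T)$ produces an admissible extension $\Gamma[f \chi_{[0,T)}] \in Z^s$ of $\Gamma[f]$, whence
\[
\|\Gamma[f]\|_{Z^s_T}^2 \le \sum_{N \in 2^{\N_0}} N^{2s}\|P_N\Gamma[f \chi_{[0,T)}]\|_{U^2_\Delta}^2.
\]
Since $P_N$ commutes with $S(t)$, I can pull the projection inside $\Gamma$ and reduce to bounding $\|\Gamma[P_N f \chi_{[0,T)}]\|_{U^2_\Delta}$ for each dyadic $N$.

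For fixed $N$, I invoke the duality $(U^2_\Delta)^* \simeq V^2_\Delta$ (Theorem 2.8 of \cite{HHK}). For any $h \in L^1_t L^2_x$, the function $S(-t)\Gamma[h](t) = \int_0^t S(-t')h(t')\,dt'$ is absolutely continuous with weak derivative $S(-t)h(t)$, so integrating by parts against a $V^2$ test function $v$ with $v(-\infty) = 0$ produces the pairing
\[
\bigl( \Gamma[h] , \, S(\cdot)v \bigr)_{U^2_\Delta \times V^2_\Delta} = \int_0^\infty \lr{h(t), \, S(t)v(t)}_{L^2_x} \, dt,
\]
which is the content of Proposition 2.10 in \cite{HHK}. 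Applying this with $h = P_N f \chi_{[0,T)}$ and setting $g(t) := S(t)v(t) \in V^2_\Delta$, then using self-adjointness of $P_N$, yields
\[
\|\Gamma[P_N f \chi_{[0,T)}]\|_{U^2_\Delta} \lesssim \sup_{\|g\|_{V^2_\Delta}=1} \left| \int_0^T \lr{f(t), \, P_N g(t)}_{L^2_x} \, dt \right|.
\]
Note that no spectral support restriction on $g$ is needed, because only $P_N g$ actually enters the pairing.

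Squaring, weighting by $N^{2s}$, and summing over $N$ produces the claim. The main technical point is the rigorous justification of the duality pairing as a Bochner integral in the Duhamel setting, for which I would simply cite the relevant parts of \cite{HHK} rather than rederive them; the rest is bookkeeping with the Littlewood-Paley square function. The hypothesis $f \in L^1_t H^s$ ensures the integration by parts is valid and that the right-hand side makes sense termwise.
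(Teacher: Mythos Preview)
Your proof is correct and follows essentially the same approach as the paper: the paper simply cites Proposition 2.10 and Remark 2.11 of \cite{HHK}, and you have spelled out precisely how that duality machinery is applied here---the Littlewood--Paley decomposition, the zero-extension to pass from $Z^s_T$ to $Z^s$, and the $U^2_\Delta$--$V^2_\Delta$ pairing for each dyadic piece.
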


This estimate follows from Proposition 2.10, Remark 2.11 in \cite{HHK}.

\section{Probabilistic nonlinear estimates} \label{S:nonlinear}

First of all, we recall the notations which are introduced in \S 1.
Put $\mathcal{N}_{m} (u) = \partial (\overline{u}^{m})$.
Let $z(t) = z^{\omega} (t) := S(t) \phi ^{\omega}$ and $v(t) = u(t) - z(t)$ be the linear and nonlinear parts of $u$ respectively.
We consider the following perturbed equation:
\begin{equation*}
\left\{
\begin{aligned}
& (i\partial _t + \Delta ) v = \pm \mathcal{N}_{m}(v+z) , \\
& v(0,x) =0 .
\end{aligned}
\right.
\end{equation*}

To state probabilistic nonlinear estimates, we define the following sets:
\begin{align*}
\mathfrak{S}^{2}_{\delta} := & \left\{ (4,4) ,  (4,2d) \right\} ,
\\
\mathfrak{S}^{m}_{\delta} := & \left\{ (4,4) , \big( 4(m-1), \tfrac{2(m-1)^2d}{(m-1)d+m-3} \big) , \big( \tfrac{2(m-2)(2+\delta )}{\delta} , \tfrac{2(m-1)(m-2)d}{(m-1)d-2} \big) , \big( 4, \tfrac{2d}{d-1} \big) \right\} \\
& \cup \bigcup _{l=1}^{m-1} \left\{ \big( \tfrac{2(m-l)(2+\delta )}{\delta}, \tfrac{2(m-l)(2+\delta )}{\delta} \big) , \big( \tfrac{4(m-l)(4+\delta )}{\delta}, \tfrac{2(m-1)(m-l)(4+\delta )d}{(m-1)(8+\delta )-2(4+\delta )(l-1)} \big) , \big( \tfrac{2(m-l)(2+\delta )}{\delta}, \tfrac{m-l}{\delta} \big) , \right. \\
& \hspace*{40pt} \left. \big( \tfrac{2(m-l)(2+\delta )}{\delta}, \tfrac{d(m-l)(d+\delta )}{\delta} \big) , \big( \tfrac{2(m-l)(2+\delta )}{\delta}, (m-1)d \big)  \right\} .
\end{align*}
The set $\mathfrak{S}^{2}_{\delta}$ does not depend on $\delta$.
But for convenience, we use this notation.
For an interval $I \subset \R$ and $\delta >0$,
\begin{equation} \label{norm:S_0}
\norm{u}_{S_{\delta}^{m} (I)} := \max \{ \norm{u}_{L_t^q L_x^r (I \times \R ^d)} : (q,r) \in \mathfrak{S}^{m}_{\delta} \} .
\end{equation}
The followings are the main results in this section.

\begin{lem} \label{lem:nonest}
Assume  $d \ge 1$, $m \ge 2$, and $d+m \ge 5$.
Let $\max \left( \frac{d-1}{d} s_c , \frac{s_c}{2}, s_c - \frac{d}{2(d+1)} \right)<s< s_c$ and $\delta >0$ be sufficiently small depending only on $d$, $m$ and $s$.
Given $\phi \in H^s(\R ^d)$, let $\phi ^{\omega}$ be its randomization defined by \eqref{eq:rand}.
For $R>0$, we put
\[
E_R^m := \{ \omega \in \Omega : \norm{\phi ^{\omega}}_{H^s} + \norm{\lr{\nabla}^s S(t) \phi ^{\omega}}_{S^m_{\delta} (\R )} \le R \} .
\]
Then, we have
\begin{align}
\norm{\Gamma [\mathcal{N}_{m}(v+z)]}_{Z^{s_c}_T} & \le C_1 \left( \norm{v}_{Y^{s_c}_T}^m + R ^m \right) , \label{eq:nonest1} \\
\norm{\Gamma [\mathcal{N}_{m}(v_1+z)] - \Gamma [\mathcal{N}(v_2+z)]}_{Z^{s_c}_T} & \le C_2 \left( \norm{v_1}_{Y^{s_c}_T}^{m-1} + \norm{v_2}_{Y^{s_c}_T}^{m-1} + R^{m-1} \right) \norm{v_1-v_2}_{Y^{s_c}_T} \label{eq:nonest2}
\end{align}
for any $T \in (0, \infty )$, $v, v_1, v_2 \in Y^{s_c}_T$, and $\omega \in E_R^m$.
Here, the constants $C_1$ and $C_2$ are depending only on $d$ and $m$.
\end{lem}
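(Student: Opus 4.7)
The plan is to expand
\[
\overline{(v+z)}^m = \sum_{k=0}^{m}\binom{m}{k}\overline{v}^{\,k}\overline{z}^{\,m-k}
\]
and estimate each term separately; the Lipschitz version \eqref{eq:nonest2} will then follow from the telescoping identity $(v_1+z)^{k} - (v_2+z)^{k} = (v_1 - v_2)\sum_{j}(v_1+z)^{j}(v_2+z)^{k-1-j}$ and a rerun of the same estimates. By Proposition \ref{prop:Duh}, bounding $\|\Gamma[\mathcal{N}_m(v+z)]\|_{Z^{s_c}_T}$ reduces to bounding
\[
\left| \int_0^T \big\langle \partial(\overline{u}_1 \cdots \overline{u}_m),\; P_{N_0} g \big\rangle_{L^2_x}\, dt \right|
\]
for every choice of factors $u_j \in \{v, z\}$ and every $g$ in the unit $V^2_\Delta$-ball, with a gain that permits $\ell^2$-summation weighted by $N_0^{s_c}$.

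After dyadically localizing each factor at frequency $N_j$ and writing $N_{\max} = \max_{0 \le j \le m} N_j$, the essential structural ingredient is the nonresonance identity: when every factor sits on its characteristic (modulation $\sigma_j = 0$), the constraint $\xi_0 + \sum_j \xi_j = 0$ together with $\tau_0 + \sum_j \tau_j = 0$ forces $|\xi_0|^2 + \sum_j |\xi_j|^2 = 0$, hence all $\xi_j = 0$. Consequently the maximal modulation among the $m+1$ pieces is $\gtrsim N_{\max}^2$. Inserting the cutoff $Q_{>M}$ with $M \sim N_{\max}^2$ on the corresponding factor, Lemma \ref{lem:Q} trades its $V^2_\Delta$-bound for an $L^2_{t,x}$-bound at the cost $M^{-1/2} \sim N_{\max}^{-1}$, which absorbs exactly the derivative loss $|\xi_0| \lesssim N_{\max}$ from $\partial$ acting on the output.

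With the derivative thus neutralized, I close by H\"older in spacetime. Each factor equal to $v$ is placed in an admissible $V^2_\Delta$-Strichartz norm via Lemma \ref{lem:Stv}, and whenever two high-frequency $v$-inputs pair against a much lower partner (output or other input), I invoke the bilinear refinement of Corollary \ref{cor:biStv} to harvest the decisive factor $(N_{\min}/N_{\max})^{1/2-\delta}$ needed for dyadic summability. Each factor equal to $z = S(t)\phi^\omega$ is placed in one of the norms $L^q_t L^{\bar r}_x$ with $(q,\bar r) \in \mathfrak{S}^m_\delta$, all of which are controlled by $R$ on $E^m_R$ thanks to the improved Strichartz bounds of Lemma \ref{lem:stcStr}. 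The exponents in $\mathfrak{S}^m_\delta$ have been engineered so that, after combining with the $N_{\max}^{-1}$ modulation gain, the bilinear refinement, and the different Sobolev indices ($s$ on $z$ versus $s_c$ on $v$), every summand carries a strictly positive power of $(N_{\min}/N_{\max})$ or $N_{\max}^{-1}$.

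The main obstacle will be organizing the case analysis according to $k$, the number of $v$-factors. The pure linear contribution $k=0$ is bounded directly by $R^m$. For $k=1$, of type $\overline{v}\,\overline{z}^{\,m-1}$, the binding constraint becomes $s > \frac{d-1}{d} s_c$, because the $m-1$ copies of $z$ carry only $s$ regularity while the remainder $v$ must be recovered at $s_c$; for the terms of type $\overline{v}^{\,m-1}\overline{z}$ (``almost purely nonlinear'') the binding constraint becomes $s > s_c - \frac{d}{2(d+1)}$, reflecting the limited gain of the bilinear refinement against a single low-regularity $z$; the threshold $s > s_c/2$ governs the borderline $d=1$ regime, where the bilinear refinement is not available. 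Taking the maximum over the three cases produces the hypothesis $\alpha(d,m)$ and closes both \eqref{eq:nonest1} and \eqref{eq:nonest2}.
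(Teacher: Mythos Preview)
Your outline is the paper's strategy exactly: duality via Proposition~\ref{prop:Duh}, dyadic localization, the nonresonance bound \eqref{mod} to absorb the derivative through Lemma~\ref{lem:Q}, Corollary~\ref{cor:biStv} for high--low pairings among deterministic factors, and the $\mathfrak{S}^m_\delta$-norms for the $z$ factors; your attribution of the thresholds $\frac{d-1}{d}s_c$ and $s_c-\frac{d}{2(d+1)}$ to the $\overline{v}\,\overline{z}^{\,m-1}$ and $\overline{v}^{\,m-1}\overline{z}$ interactions respectively also agrees with the paper.

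What you have written, however, is a plan and not a proof. The paper's argument fills roughly ten pages because the case analysis is not merely in $k$: within each $k$ one must further split on \emph{which} of the $m+1$ factors carries the high modulation and on the relative ordering of the frequencies (e.g.\ $N_0 \sim N_l \gtrsim N_m$ versus $N_0 \sim N_m \gtrsim N_l$ versus $N_{l-1}\sim N_l \gtrsim N_0,N_m$, etc.), with secondary thresholds such as $N_{l-1} \gtrless N_0^{1/(d+1)}$ or $N_{m-1} \gtrless N_0^{1/(d-1)}$ dictating \emph{which} H\"older splitting among the $\mathfrak{S}^m_\delta$-norms to use. Each subcase has to be summed by hand, and the sentence ``the exponents in $\mathfrak{S}^m_\delta$ have been engineered so that \dots\ every summand carries a strictly positive power'' is precisely the content that has to be verified, not asserted.

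One small correction: the threshold $s>s_c/2$ does not stem from any failure of the bilinear refinement at $d=1$ (Corollary~\ref{cor:biStv} is valid for all $d\ge 1$). It arises in every dimension from the configurations where the two highest-frequency inputs are both $z$ and comparable (the paper's Subcases 2-2, 2-3 and 3-5); it is simply the \emph{binding} constraint only when $d=1$, being dominated by the other two thresholds otherwise.
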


\begin{rmk}
In the quadratic case, the condition $s> \frac{d-1}{d} s_c$ comes from the estimate for $zz$.

In the cubic case, the condition $s>\frac{d-1}{d} s_c$ needs to treat the $zzv$ case (see \S \ref{S:Nonestm3} below), while the other cases are less restricted.
On the other hand, the lower bound of the regularity in \cite{BOP2} appears in the $zzz$ case.
\end{rmk}

\begin{rmk} \label{rmk:outside1}
Note that the pairs 
\[
\big( \tfrac{2(m-l)(2+\delta )}{\delta}, \tfrac{2d(m-l)(2+\delta )}{d(m-l)(2+\delta )-2\delta} \big) ,\
\big( 4(m-1), \tfrac{2d(m-1)}{(m-1)d-1} \big) , \big( \tfrac{4(m-l)(4+\delta )}{\delta} , \
\tfrac{2d(m-l)(4+\delta )}{d(m-l)(4+\delta )-\delta} \big) , \
\big( 4, \tfrac{2d}{d-1} \big)
\]
are admissible.
Accordingly, Lemmas \ref{lem:randiff} and \ref{lem:stcStr} imply that $E_R$ in Lemma \ref{lem:nonest} satisfies the bound
\[
P(\Omega \backslash E_R^m) \le C \exp \left( -c \frac{R^2}{\norm{\phi}_{H^s}^2} \right).
\]
\end{rmk}

To show the local in time nonlinear estimates, we define the norm
\begin{equation} \label{norm:S_0'}
\norm{u}_{S_{\delta}^{m,L} (I)} := \max \Big\{ \norm{u}_{L_t^q L_x^r (I \times \R ^d)} : \big( \tfrac{q}{1-\delta q} ,r \big) \in \mathfrak{S}_{\delta}^m \backslash \{ (4,4), \big( 4, \tfrac{2d}{d-1} \big) \} \text{ or } (q,r)=(4,4), \big( 4, \tfrac{2d}{d-1} \big) \Big\}
\end{equation}
Since H\"{o}lder's inequality yields
\begin{equation} \label{boundT}
\norm{f}_{L^q_T X} \lesssim T^{\delta} \norm{f}_{L^{\frac{q}{1-\delta q}}_T X}
\end{equation}
for any Banach space $X$, we obtain the following (see the proof of Lemma \ref{lem:nonest} and Remark \ref{rmkm2} below).

\begin{lem} \label{lem:nonestl}
Assume $d \ge 1$, $m \ge 2$, and $d+m \ge 5$.
Let $\max \left( \frac{d-1}{d} s_c , \frac{s_c}{2}, s_c - \frac{d}{2(d+1)} \right)<s< s_c$ and $\delta >0$ be sufficiently small depending only on $d$, $m$ and $s$.
Given $\phi \in H^s(\R ^d)$, let $\phi ^{\omega}$ be its randomization defined by \eqref{eq:rand}.
For $R>0$, we put
\[
E_R^{m,L} := \{ \omega \in \Omega : \norm{\phi ^{\omega}}_{H^s} + \norm{S(t) \phi ^{\omega}}_{S_{\delta}^{m,L} (\R )} \le R \} .
\]
Then, we have
\begin{align}
\norm{\Gamma [\mathcal{N}_m(v+z)]}_{Z^{s_c}_T} & \le C_1' \left( \norm{v}_{Y^{s_c}_T}^m + T ^{\delta} R ^m \right) , \label{eq:nonestl1} \\
\norm{\Gamma [\mathcal{N}_m(v_1+z)] - \Gamma [\mathcal{N}_m(v_2+z)]}_{Z^{s_c}_T} & \le C_2' \left( \norm{v_1}_{Y^{s_c}_T}^{m-1} + \norm{v_2}_{Y^{s_c}_T}^{m-1} + T^{\delta} R^{m-1} \right) \norm{v_1-v_2}_{Y^{s_c}_T} \label{eq:nonestl2}
\end{align}
for $0<T \le 1$, all $v, v_1, v_2 \in Y^{s_c}_T$, and $\omega \in E_R^{m,L}$.
\end{lem}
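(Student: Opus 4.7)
The lemma is the finite-time variant of Lemma \ref{lem:nonest}: the free evolution $z=S(t)\phi^{\omega}$ is now controlled only in $S^{m,L}_\delta$ (a norm with slightly smaller time exponents than $S^m_\delta$, and without the weight $\lr{\nabla}^s$), and in compensation the conclusion acquires a factor $T^\delta$ on every contribution that involves $z$. The plan is to re-run the proof of Lemma \ref{lem:nonest}, substituting the pairs in $\mathfrak{S}^{m,L}_\delta$ for those in $\mathfrak{S}^m_\delta$ and extracting $T^\delta$ at the time-H\"older step provided by \eqref{boundT}.

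By Proposition \ref{prop:Duh} and the expansion
\[
\mathcal N_m(v+z)=\sum_{j=0}^m \binom{m}{j}\partial(\bar v^{\,m-j}\bar z^{\,j}),
\]
the problem reduces to bounding each summand. The purely deterministic summand ($j=0$) is handled exactly as in Lemma \ref{lem:nonest} and produces a contribution $\norm{v}_{Y^{s_c}_T}^m$. For the summands with $j\ge 1$, one follows the multilinear H\"older / bilinear Strichartz (Corollary \ref{cor:biStv}) / modulation (Lemma \ref{lem:Q}) chain of Lemma \ref{lem:nonest}, but at each step where that argument uses $\norm{z}_{L^{q^*}_T L^r_x}$ with $(q^*,r)\in\mathfrak S^m_\delta\setminus\{(4,4),(4,2d/(d-1))\}$, one instead uses the corresponding pair $(q,r)\in\mathfrak S^{m,L}_\delta$ with $q=q^*/(1+\delta q^*)$. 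The time-H\"older bound \eqref{boundT}, applied at the single step that reconciles the combined spacetime norm back to the one needed for duality in Proposition \ref{prop:Duh}, produces the extra factor $T^\delta$. The two exceptional pairs $(4,4)$ and $(4,2d/(d-1))$ survive intact, being directly contained in $S^{m,L}_\delta$. This yields, for $1\le j\le m$, a contribution of the form $T^\delta\norm{v}_{Y^{s_c}_T}^{m-j}R^j$.

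The pieces are assembled by Young's inequality. For $1\le j\le m-1$, with conjugate exponents $(m/(m-j),m/j)$ and the assumption $T\le 1$,
\[
T^\delta\norm{v}_{Y^{s_c}_T}^{m-j}R^j \lesssim \norm{v}_{Y^{s_c}_T}^m + T^{\delta m/j}R^m \le \norm{v}_{Y^{s_c}_T}^m + T^\delta R^m,
\]
while the $j=m$ case already produces $T^\delta R^m$. Summing over $j$ yields \eqref{eq:nonestl1}, and \eqref{eq:nonestl2} follows by the same scheme after writing $\mathcal N_m(v_1+z)-\mathcal N_m(v_2+z)$ as a telescoping multilinear sum and factoring out $\norm{v_1-v_2}_{Y^{s_c}_T}$.

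The main obstacle is bookkeeping. Since $S^{m,L}_\delta$ controls $z$ rather than $\lr{\nabla}^s z$, one must verify that the derivative $\partial$ in $\partial(\bar u^m)$ never has to land on a $z$-factor; this is precisely what the absence-of-resonance structure of the equation provides, via the bilinear and modulation bounds recalled in \S \ref{S:funct_sp}. Once the derivative is always absorbed either by the highest-frequency $v$-factor or by the spacetime modulation, the remaining $z$-factors appear only in plain Strichartz-type norms of the form controlled by $S^{m,L}_\delta$, and the time-H\"older swap proceeds without loss.
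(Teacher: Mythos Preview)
Your overall plan---re-run the proof of Lemma~\ref{lem:nonest} and insert a factor $T^\delta$ via \eqref{boundT} in every term containing at least one $z$---is exactly what the paper does.  But the mechanism you describe for producing the $T^\delta$ is backwards, and as written it does not close.

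You propose to replace each $z$-norm $\norm{z}_{L^{q^*}_T L^r_x}$ by the $S^{m,L}_\delta$ norm $\norm{z}_{L^{q}_T L^r_x}$ with $q=q^*/(1+\delta q^*)<q^*$, and then ``reconcile'' via \eqref{boundT}.  After this substitution the reciprocals of the time exponents in the H\"older product sum to $1+\delta$, not $1$, so the product only lies in $L^{1/(1+\delta)}_t$; \eqref{boundT} goes the other way ($\norm{f}_{L^q_T}\le T^\delta\norm{f}_{L^{q/(1-\delta q)}_T}$ bounds a \emph{smaller} exponent by a larger one) and cannot take you from $L^{1/(1+\delta)}_t$ back up to $L^1_t$.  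No $T^\delta$ appears---in fact the estimate fails.

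The paper's actual device (Remark~\ref{rmkm2}) is different: the $T^\delta$ is extracted not from the $z$-factor but from the \emph{high-modulation} factor present in every case.  Wherever the proof of Lemma~\ref{lem:nonest} uses $\norm{Q_{>M}w}_{L^2_{t,x}}$ (with $w=v_0$ or $w=v$), one writes instead
\[
\norm{Q_{>M}w}_{L^2_{t,x}}\lesssim T^\delta\,\norm{Q_{>M}w}_{L^{2/(1-2\delta)}_t L^2_x}\lesssim T^\delta\,M^{-\frac{1-2\delta}{2}}\norm{w}_{V^2_\Delta},
\]
by \eqref{boundT} and Lemma~\ref{lem:Q}.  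The only price is an $M^{\delta}$ loss in the modulation gain, which is absorbed by the strict inequalities (the $-\frac12+\dots$, $-s+\dots$ exponents) already present in every frequency summation.  The $z$-norms are left untouched in these cases.  Only in the borderline case with no slack (Subcase~3-3 for $m=2$, and its analogues) must one rebalance the H\"older to involve the modified $S^{m,L}_\delta$ pair for $z$; there the smaller time exponent on $z$ is compensated by moving the $v$-factor from $L^2_t$ to $L^{2/(1-2\delta)}_t$ so that the reciprocals again sum to $1$.

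Finally, your last paragraph misidentifies the obstruction raised by dropping $\lr{\nabla}^s$ from $E^{m,L}_R$.  The derivative $\partial$ is always absorbed as the factor $N_0$ via the modulation bound; the r\^ole of $\lr{\nabla}^s$ in $E^m_R$ is rather to produce the decay $\norm{P_{N_k}z}_{L^{q^*}_tL^r_x}\lesssim N_k^{-s}R$ used in the frequency sums, and it is this (not the placement of $\partial$) that one must check survives the passage to $E^{m,L}_R$.
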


Lemmas \ref{lem:randiff} and \ref{lem:stcStr} imply the bound (see Remark \ref{rmk:outside1})
\begin{equation} \label{outside2}
P(\Omega \backslash E_R^{m,L}) \le C \exp \left( -c \frac{R^2}{\norm{\phi}_{H^s}^2} \right) .
\end{equation}

\noindent
\textit{Proof of Lemma \ref{lem:nonest}}

We only prove \eqref{eq:nonest1} because \eqref{eq:nonest2} follows from a similar manner.
Thanks to Proposition \ref{prop:Duh}, it suffices to show
\begin{equation} \label{qu_est1}
\left\{ \sum _{N_0 \in 2^{\N _0}} N_0^{2s_c+2} \left| \int _{\R ^{1+d}} P_{N_0} v_0 \prod _{j=1}^m w_j dx dt \right| ^2 \right\} ^{\frac{1}{2}}
\lesssim \norm{v}_{Y^{s_c}}^m + R^m,
\end{equation}
where $w_j= v$ or $z$ ($j=1, \dots , m$) and $v_0 \in V^2_{\Delta}$ with $\norm{v_0}_{V^2_{\Delta}} =1$.

We use the dyadic decomposition as follows.
\[
\int _{\R ^{1+d}} P_{N_0} v_0 \prod _{j=1}^m w_j dx dt
= \sum _{N_1, \dots , N_m \in 2^{\N _0}} \int _{\R ^{1+d}} P_{N_0} v_0 \prod _{j=1}^m P_{N_j} w_j dx dt
\]
Here, we divide the integration on the right hand side into $2^{m+1}$ parts of the form
\[
\int _{\R ^{1+d}} Q_0 P_{N_0} v_0 \prod _{j=1}^m Q_j P_{N_j} w_j dx dt
\]
with $Q_j  \in \{ Q _{\le M}, Q_{>M} \}$ ($j=0,1, \dots ,m$).
This decomposition is only meaningful if $w_j = v$ because $Q_{\le M} z = Q_{\le M} e^{it\Delta} \phi ^{\omega} = e^{it\Delta} \phi ^{\omega} =z$ and $Q_{>M} z = Q_{>M} e^{it \Delta} \phi ^{\omega} =0$.

Here, we note that at least one of the modulations is bounded below.
More precisely, for $(\tau _j ,\xi _j) \in \R ^{1+d}$ ($j=0,1, \dots ,m$) with $\sum _{j=0}^m \tau _j =0$ and $\sum _{j=0}^m \xi _j =0$, by the triangle inequality, we have
\begin{equation} \label{mod}
\max _{0 \le j \le m} |\tau _j + |\xi _j|^2| \ge \frac{1}{m+1} \max _{0 \le j \le m} |\xi _j|^2.
\end{equation}
Thus, let us assume that one of $Q_j$ is $Q_{> \max _{0 \le j \le m} N_j^2}$ otherwise the integration becomes zero.
Putting $\mathfrak{R}_j := Q_j P_{N_j}$, we mainly focus on the estimate of
\[
I := \sum _{N_0, N_1 \dots , N_m \in 2^{\N _0}} N_0^{s_c+1} \left| \iint _{\R ^{1+d}} \mathfrak{R}_0 v_0 \prod _{j=1}^m \mathfrak{R}_j w_j dx dt \right| ,
\]
which is bigger than the left hand side of \eqref{qu_est1} because of $l^1 \hookrightarrow l^2$.
For a set $\mathfrak{N} \subset (2^{\N _0})^{m+1}$ (for example, $\mathfrak{N}$ is defined by $\{ N_2, \dots , N_m \le N_0 \sim N_1 \}$), we use the notation $I_{\mathfrak{N}}$ as
\[
I_{\mathfrak{N}} := \sum _{\substack{N_0, N_1, \dots , N_m \in 2^{\N _0} \\ (N_0, N_1, \dots , N_m) \in \mathfrak{N}}} N_0^{s_c+1} \left| \iint _{\R ^{1+d}} \mathfrak{R}_0 v_0 \prod _{j=1}^m \mathfrak{R}_j w_j dx dt \right| .
\]

We separately treat the cases $m=2$ and $m \ge 3$.

\subsection{The case $m=2$}

In this subsection, we consider the case $m = 2$, where we have $s_c = \frac{d-2}{2}$.

\begin{proof}[Proof of \eqref{eq:nonest1} with $m=2$]

\mbox{}

\noindent
\textbf{Case 1:} $vv$ case.

Although this case is essentially treated in \cite{H14}, we give a proof for completeness.
Put $v_1=v_2=v$, $N_{\min}=\min (N_0,N_1,N_2)$, $N_{\med}=\med (N_0,N_1,N_2)$, and $N_{\max}=\max (N_0,N_1,N_2)$ for convenience.
There exists a permutation $\{ i,j,k \}$ of $\{ 0,1,2 \}$ such that $Q_{i} = Q_{>N_{\max}^2}$.
By Lemma \ref{lem:Q}, Corollary \ref{cor:biStv}, and $\norm{v_0}_{V^2_{\Delta}} =1$, we have
\[
\left| \int _{\R ^{1+d}} \mathfrak{R}_0 v_0 \mathfrak{R}_1 v \mathfrak{R}_2 v dx dt \right|
\lesssim \norm{Q_{N_{\max}^2} P_{N_i} v_i}_{L^2_{t,x}} \norm{P_{N_i} (\mathfrak{R}_j v_j \mathfrak{R}_k v_k)}_{L^2_{t,x}}
\lesssim N_{\max}^{-\frac{3}{2}+\delta} N_{\min}^{\frac{d-1}{2}-\delta} \prod _{l=0}^2 \norm{P_{N_l} v_l}_{V^2_{\Delta}}.
\]
Thanks to $N_{\min} \lesssim N_{\min} \sim N_{\max}$, we obtain
\begin{align*}
& \sum _{N_1, N_2 \in 2^{\N _0}} \left\{ \sum _{\substack{N_0 \in 2^{\N _0} \\ N_{\min} \lesssim N_{\med} \sim N_{\max}}} N_0^{d} \left| \int _{\R ^{1+d}} \mathfrak{R}_0 v_0 \mathfrak{R}_1 v \mathfrak{R}_2 v dx dt \right| ^2 \right\} ^{\frac{1}{2}} \\
& \lesssim \sum _{N_1, N_2 \in 2^{\N _0}} \left\{ \sum _{\substack{N_0 \in 2^{\N _0} \\ N_{\min} \lesssim N_{\med} \sim N_{\max}}} N_{\max}^{d-2} N_{\min}^{d-2} \prod _{l=0}^2 \norm{P_{N_l} v_l}_{V^2_{\Delta}}^2 \right\} ^{\frac{1}{2}} \\
& \lesssim \norm{v}_{Y^{\frac{d-2}{2}}}^2.
\end{align*}

\noindent
\textbf{Case 2:} $zz$ case.

Without loss of generality, we may assume $N_1 \le N_2$.
Moreover, $Q_0 =Q_{> N_0^2}$ holds in this case.

\textbf{Subcase 2-1:} $N_1 \ll N_2 \sim N_0$, $N_1\lesssim N_2^{\frac{1}{d-1}}$.

By H\"{o}lder's inequality, and Lemmas \ref{lem:Q}, \ref{lem:biSt}, we get
\begin{align*}
I_{\substack{N_1 \ll N_2 \sim N_0, \\N_1\lesssim N_2^{\frac{1}{d-1}}}}
& \lesssim \sum _{\substack{N_1 \ll N_2 \sim N_0 \\ N_1\lesssim N_2^{\frac{1}{d-1}}}} 
N_0^{\frac{d}{2}}\norm{Q_{>N_0^2} P_{N_0} v_0}_{L^{2}_{t,x}} \norm{P_{N_1} z P_{N_2} z }_{L^2_{t,x}} \\
& \lesssim \sum _{\substack{N_1 \ll N_2 \sim N_0 \\ N_1\lesssim N_2^{\frac{1}{d-1}}}} 
N_0^{\frac{d}{2}} N_0^{-1}N_1^{\frac{d}{2}-1}\left(\frac{N_1}{N_2}\right)^{\frac{1}{2}}
\norm{P_{N_0} v_0}_{V^{2}_{\Delta}}\norm{P_{N_1}\phi^{\omega}}_{L^2_x}\norm{P_{N_2}\phi^{\omega}}_{L^2_x}\\
& \lesssim \sum _{\substack{N_1 \ll N_2 \sim N_0 \\ N_1\lesssim N_2^{\frac{1}{d-1}}}} 
N_1^{\frac{d-1}{2}}N_2^{\frac{d-3}{2}}N_1^{-s}N_2^{-s}R^2\norm{P_{N_0} v_0}_{V^{2}_{\Delta}}\\
& \lesssim \sum _{\substack{N_1 \ll N_2 \sim N_0 \\ N_1\lesssim N_2^{\frac{1}{d-1}}}} 
N_1^{-s+\frac{(d-1)(d-2)}{2d}}N_2^{-s+\frac{(d-1)(d-2)}{2d}}R^2\norm{P_{N_0} v_0}_{V^{2}_{\Delta}}\\
&\lesssim R^2
\end{align*}
for $\omega \in E_R^2$.
Here, we have used $\frac{(d-1)(d-2)}{2d} <s<\frac{d-2}{2}$, $\norm{v_0}_{V^2_{\Delta}} =1$ and $\delta >0$ is sufficiently small in the last inequality.

\textbf{Subcase 2-2:} $N_1 \ll N_2 \sim N_0$, $N_1\gtrsim N_2^{\frac{1}{d-1}}$.

By H\"{o}lder's inequality and Lemma \ref{lem:Q}, we get
\begin{align*}
I_{\substack{N_1 \ll N_2 \sim N_0, \\N_1\gtrsim N_2^{\frac{1}{d-1}}}}
& \lesssim \sum _{\substack{N_1 \ll N_2 \sim N_0 \\ N_1\gtrsim N_2^{\frac{1}{d-1}}}} 
\norm{Q_{>N_0^2} P_{N_0} v_0}_{L^{2}_{t,x}} N_0^{\frac{d}{2}}\norm{P_{N_1} z }_{L^4_{t,x}}\norm{P_{N_2} z }_{L^4_{t,x}} \\
& \lesssim \sum _{\substack{N_1 \ll N_2 \sim N_0 \\ N_1\gtrsim N_2^{\frac{1}{d-1}}}} 
N_0^{\frac{d}{2}} N_0^{-1}N_1^{-s}N_2^{-s}R^2 \norm{P_{N_0} v_0}_{V^{2}_{\Delta}}\\
& \lesssim \sum _{\substack{N_1 \ll N_2 \sim N_0 \\ N_1\gtrsim N_2^{\frac{1}{d-1}}}} 
N_1^{-s+\frac{(d-1)(d-2)}{2d}}N_2^{-s+\frac{(d-1)(d-2)}{2d}}R^2 \norm{P_{N_0} v_0}_{V^{2}_{\Delta}}\\
&\lesssim R^2
\end{align*}
for $\omega \in E_R^2$.
Here, we have used $\frac{(d-1)(d-2)}{2d} <s<\frac{d-2}{2}$ and $\norm{v_0}_{V^2_{\Delta}} =1$ in the last inequality.

\textbf{Subcase 2-3:} $N_0 \lesssim N_1 \sim N_2$.

By H\"{o}lder's inequality and Lemma \ref{lem:Q}, we get
\begin{align*}
I_{N_0 \lesssim N_1 \sim N_2}
& \lesssim \sum _{N_0 \lesssim N_1 \sim N_2} 
N_0^{\frac{d}{2}}\norm{Q_{>N_0^2} P_{N_0} v_0}_{L^{2}_{t,x}} \norm{P_{N_1} z }_{L^4_{t,x}}\norm{P_{N_2} z }_{L^4_{t,x}} \\
& \lesssim \sum _{N_0 \lesssim N_1 \sim N_2} 
N_0^{\frac{d}{2}} N_0^{-1}N_1^{-s}N_2^{-s}R^2 \norm{P_{N_0} v_0}_{V^{2}_{\Delta}}\\
& \lesssim \sum _{N_0} 
N_{0}^{-2s+\frac{d-2}{2}}R^2 \norm{P_{N_0} v_0}_{V^{2}_{\Delta}}\\
&\lesssim R^2
\end{align*}
for $\omega \in E_R^2$.
Here, we have used $\frac{d-2}{4}\le \frac{(d-1)(d-2)}{2d} <s<\frac{d-2}{2}$ and $\norm{v_0}_{V^2_{\Delta}} =1$ in the last inequality.

\noindent
\textbf{Case 3:} $zv$ case.

We consider only $N_2\lesssim N_0$ since the case $N_0\lesssim N_2$ is simpler.
(In fact, if $N_0\lesssim N_2$, then $N_0^{\frac{d}{2}}N_2^{-s}\lesssim N_2^{\frac{d}{2}}N_0^{-s}$ for $s\ge 0$.)
In this case, $Q_2 =Q_{> N_0^2}$ or $Q_0 =Q_{> N_0^2}$ holds in this case. 
We deal with only $Q_2 =Q_{> N_0^2}$ because the case $Q_0 =Q_{> N_0^2}$ follows from the same manner.

\textbf{Subcase 3-1:} $N_2\ll N_1\sim N_0$, $N_2\lesssim N_0^{\frac{1}{d-1}}$.

By H\"{o}lder's inequality, Lemma \ref{lem:Q}, and Corollary \ref{cor:biStv}, we have
\begin{align*}
I_{N_2\ll N_1\sim N_0, N_2\lesssim N_0^{\frac{1}{d-1}}}
& \lesssim \sum _{\substack{N_2\ll N_1\sim N_0 \\ N_2\lesssim N_0^{\frac{1}{d-1}}}} 
N_0^{\frac{d}{2}} \norm{P_{N_2}\left(P_{N_1} z \mathfrak{R}_0 v_0 \right)}_{L^2_{t,x}}  \norm{Q_{>N_0^2} P_{N_2} v}_{L^2_{t,x}} \\
& \lesssim \sum _{\substack{N_2\ll N_1\sim N_0 \\ N_2\lesssim N_0^{\frac{1}{d-1}}}} 
N_0^{\frac{d}{2}}N_0^{-1} N_2^{\frac{d}{2}-1}\left(\frac{N_2}{N_1}\right)^{\frac{1}{2}-\delta}
\norm{P_{N_1} \phi^{\omega}}_{L^{2}_x}  \norm{P_{N_2} v}_{V^2_{\Delta}} \norm{P_{N_0}v_0}_{V^2_{\Delta}} \\
& \lesssim \sum _{\substack{N_2\ll N_1\sim N_0 \\ N_2\lesssim N_0^{\frac{1}{d-1}}}} 
N_1^{\frac{d-3}{2}+\delta} N_2^{\frac{1}{2}-\delta}N_1^{-s}
R(N_2^{\frac{d-2}{2}}  \norm{v}_{V^2_{\Delta}}) \norm{P_{N_0}v_0}_{V^2_{\Delta}} \\
& \lesssim \sum _{N_1\sim N_0} 
N_1^{-s+\frac{(d-2)^2}{2(d-1)}+\frac{d-2}{d-1}\delta} 
R \norm{v}_{Y^{\frac{d-2}{2}}} \norm{P_{N_0}v_0}_{V^2_{\Delta}} \\
& \lesssim 
\norm{ v}_{Y^{\frac{d-2}{2}}}R
\end{align*}
for $\omega \in E_R^2$.
Here, we have used $\frac{(d-2)^2}{2(d-1)}\le \frac{(d-1)(d-2)}{2d} <s<\frac{d-2}{2}$, $\norm{v_0}_{V^2_{\Delta}} =1$ and $\delta >0$ is sufficiently small in the last inequality.

\textbf{Subcase 3-2:} $N_2\ll N_1\sim N_0$, $N_2\gtrsim N_0^{\frac{1}{d-1}}$.

By H\"{o}lder's inequality, Lemmas \ref{lem:Stv} and \ref{lem:Q}, we have
\begin{align*}
I_{N_2\ll N_1\sim N_0, N_2\gtrsim N_0^{\frac{1}{d-1}}}
& \lesssim \sum _{\substack{N_2\ll N_1\sim N_0 \\ N_2\gtrsim N_0^{\frac{1}{d-1}}}} 
N_0^{\frac{d}{2}} \norm{\mathfrak{R}_0 v_0}_{L^4_t L^{\frac{2d}{d-1}}_x} \norm{P_{N_1} z}_{L^4_t L^{2d}_x}  \norm{Q_{>N_0^2} P_{N_2} v}_{L^2_{t,x}} \\
& \lesssim \sum _{\substack{N_2\ll N_1\sim N_0 \\ N_2\gtrsim N_0^{\frac{1}{d-1}}}} 
N_0^{\frac{d}{2}}N_0^{-1} \norm{P_{N_0}v_0}_{V^2_{\Delta}} \norm{P_{N_1} z}_{L^4_t L^{2d}_x}  \norm{P_{N_2} v}_{V^2_{\Delta}} \\
& \lesssim \sum _{\substack{N_2\ll N_1\sim N_0 \\ N_2\gtrsim N_0^{\frac{1}{d-1}}}} 
N_0^{\frac{d}{2}}N_0^{-1} N_1^{-s}N_2^{-\frac{d-2}{2}}R (N_2^{\frac{d-2}{2}}\norm{P_{N_2} v}_{V^2_{\Delta}}) \norm{P_{N_0}v_0}_{V^2_{\Delta}} \\
& \lesssim \sum _{N_1\sim N_0} 
N_1^{-s+\frac{(d-2)^2}{2(d-1)}}R\norm{v}_{Y^{\frac{d-2}{2}}} \norm{P_{N_0}v_0}_{V^2_{\Delta}} \\
& \lesssim \norm{v}_{Y^{\frac{d-2}{2}}}R
\end{align*}
for $\omega \in E_R^2$.
Here, we have used $\frac{(d-2)^2}{2(d-1)}\le \frac{(d-1)(d-2)}{2d} <s<\frac{d-2}{2}$ and $\norm{v_0}_{V^2_{\Delta}} =1$ in the last inequality.

\textbf{Subcase 3-3:} $N_1\lesssim N_2\sim N_0$.

By H\"{o}lder's inequality, Lemmas \ref{lem:Stv} and \ref{lem:Q}, we have
\begin{align*}
\left| \int _{\R ^{1+d}} P_{\lesssim N_0} z Q_{>N_0^2} P_{N_0} v \mathfrak{R}_0 v_0 dx dt \right|
& \lesssim \norm{\mathfrak{R}_0 v_0}_{L^4_t L^{\frac{2d}{d-1}}_x} \norm{P_{\lesssim N_0} z}_{L^4_t L^{2d}_x}  \norm{Q_{>N_0^2} P_{N_0} v}_{L^2_{t,x}} \\
& \lesssim N_0^{-1} \norm{P_{N_0} v_0}_{V^2_{\Delta}} \norm{z}_{L^4_t L^{2d}_x}  \norm{P_{N_0} v}_{V^2_{\Delta}} \\
& \lesssim N_0^{-1} \norm{z}_{L^4_t L^{2d}_x}  \norm{P_{N_0} v}_{V^2_{\Delta}} .
\end{align*}
Hence, we obtain
\[
\left\{ \sum _{N_0 \in 2^{\N _0}} N_0^{d} \left| \int _{\R ^{1+d}} \mathfrak{R}_0 v_0 P_{\lesssim N_0} z Q_{>N_0^2} P_{N_0} v dx dt \right| ^2 \right\} ^{\frac{1}{2}}
\lesssim R \left\{ \sum _{N_0 \in 2^{\N _0}} N_0^{d-2} \norm{P_{N_0} v}_{V^2_{\Delta}}^2 \right\} ^{\frac{1}{2}}
\lesssim \norm{v}_{Y^{\frac{d-2}{2}}} R
\]
for $\omega \in E_R^2$.
\end{proof}

\begin{rmk} \label{rmkm2}
>From \eqref{boundT}, we get the factor $T^{\delta}$ in the case 3-3 if $\omega \in E^{m,L}_R$.
In the other cases, from $\norm{Q_{>N_0^2} P_{N_0} v_0}_{L^{2}_{t,x}} \lesssim T^{\delta} \norm{Q_{>N_0^2} P_{N_0} v_0}_{L^{\frac{2}{1-2\delta}}_t L^2_x}$, we get the factor $T^{\delta}$.
\end{rmk}

\subsection{The case $m \ge 3$} \label{S:Nonestm3}

In this subsection, we consider the case $m \ge 3$.

\begin{proof}[Proof of \eqref{eq:nonest1} with $m \ge 3$]

\mbox{}

\textbf{Case 1:} $w_j=v$ ($j=1, \dots , m$) case.

This is the deterministic case and the estimate is the same as in \cite{H13}.
But, we repeat it for completeness.
>From the symmetry, we may assume that $N_1 \le \dots \le N_m$.

\textbf{Subcase 1-1:} $N_0 \sim N_m \gtrsim N_{m-1}$.

Firstly, we assume $Q_1=Q_{> N_0^2}$.
The embedding $H^{s_c}(\R ^d) \hookrightarrow L^{(m-1)d}(\R ^d)$, the $L^2$-orthogonality, and Lemma \ref{lem:Q} yield that
\begin{equation} \label{det1}
\norm{Q_{>N_0^2} P_{\lesssim N_0} v}_{L^2_t L^{(m-1)d}_x}
\lesssim \left( \sum _{N \in 2^{\N _0}, N \lesssim N_0} N^{s_c} \norm{Q_{>N_0^2} P_{N} v}_{L^2_{t,x}}^2 \right) ^{\frac{1}{2}}
\lesssim N_0^{-1} \norm{v}_{Y^{s_c}}.
\end{equation}
Similarly, we get
\begin{equation} \label{det2}
\norm{Q_{\le N_0^2} P_{\lesssim N_0} v}_{L^{\infty}_t L^{(m-1)d}_x}
\lesssim \norm{v}_{Y^{s_c}} .
\end{equation}
Accordingly, from H\"{o}lder's inequality, Lemmas \ref{lem:Stv}, \ref{lem:Q}, and above estimates, we have
\begin{align*}
& \left\{ \sum _{N_0 \in 2^{\N _0}} N_0^{2s_c+2} \left| \int _{\R ^{1+d}} Q_0 P_{N_0} v_0 Q_{>N_0^2}P_{\lesssim N_0}v \left( \prod _{j=2}^{m-1} Q_j P_{\lesssim N_0} v \right) Q_m P_{N_0} v dx dt \right| ^2 \right\} ^{\frac{1}{2}} \\
& \lesssim \Bigg\{ \sum _{N_0 \in 2^{\N _0}} N_0^{2s_c+2} \norm{Q_0 P_{N_0} v_0}_{L^4_t L^{\frac{2d}{d-1}}_x}^2 \norm{Q_{>N_0^2} P_{\lesssim N_0} v}_{L^2_t L^{(m-1)d}_x}^2 \left( \prod _{j=2}^{m-1} \norm{Q_j P_{\lesssim N_0} v}_{L^{\infty}_t L^{(m-1)d}_x}^2 \right) \\
& \hspace*{300pt} \times
\norm{Q_m P_{N_0} v}_{L^4_t L^{\frac{2d}{d-1}}_x}^2 \Bigg\} ^{\frac{1}{2}} \\
& \lesssim \left\{ \sum _{N_0 \in 2^{\N _0}} N_0^{2s_c} \norm{P_{N_0} v}_{V^2_{\Delta}}^2 \right\} ^{\frac{1}{2}} \norm{v}_{Y^{s_c}}^{m-1} \norm{v_0}_{V^2_{\Delta}} \\
& \lesssim \norm{v}_{Y^{s_c}}^m.
\end{align*}
Since the case $Q_j = Q_{> N_0^2}$ ($j=2, \dots , m-1$) follows from a similar argument as above, we omit the details.

Secondly, we consider the case where $Q_m = Q_{> N_0^2}$.
By H\"{o}lder's inequality, Bernstein's inequality, Lemma \ref{lem:Q}, and Corollary \ref{cor:biStv}, we have
\begin{align*}
& \sum _{\substack{N_1, \dots , N_{m-1} \in 2^{\N _0} \\ N_1 \le \dots \le N_{m-1} \lesssim N_0}} \left| \int _{\R ^{1+d}} \mathfrak{R}_0 v_0 \left( \prod _{j=1}^{m-1} \mathfrak{R}_j v \right) Q_{>N_0^2} P_{N_0} v dx dt \right| \\
& \lesssim \sum _{\substack{N_1, \dots , N_{m-1} \in 2^{\N _0} \\ N_1 \le \dots \le N_{m-1} \lesssim N_0}} \norm{\mathfrak{R}_0 v_0 \mathfrak{R}_{m-1} v}_{L^2_{t,x}} \left( \prod _{j=1}^{m-2} \norm{\mathfrak{R}_j v}_{L^{\infty}_{t,x}} \right) \norm{Q_{> N_0^2} P_{N_0} v}_{L^2_{t,x}} \\
& \lesssim \sum _{\substack{N_1, \dots , N_{m-1} \in 2^{\N _0} \\ N_1 \le \dots \le N_{m-1} \lesssim N_0}} N_{m-1}^{\frac{d}{2}-1} \left( \frac{N_{m-1}}{N_0} \right) ^{\frac{1}{2}-\delta} \left( \prod _{j=1}^{m-2} N_j^{\frac{d}{2}} \right) N_0^{-1} \norm{P_{N_0} v_0}_{V^2_{\Delta}} \norm{P_{N_0} v}_{V^2_{\Delta}} \prod _{j=1}^{m-1} \norm{P_{N_j} v}_{V^2_{\Delta}} \\
& \lesssim N_0^{-1} \norm{P_{N_0} v}_{V^2_{\Delta}} \norm{v}_{Y^{s_c}}^{m-1} .
\end{align*}
Hence, we obtain
\begin{align*}
& \left\{ \sum _{N_0 \in 2^{\N _0}} N_0^{2s_c+2} \left| \int _{\R ^{1+d}} Q_0 P_{N_0} v_0 \left( \prod _{j=1}^{m-1} Q_j P_{\le N_{j+1}} v \right) Q_{>N_0^2} P_{N_0} v dx dt \right| ^2 \right\} ^{\frac{1}{2}} \\
& \lesssim \left\{ \sum _{N_0 \in 2^{\N _0}} N_0^{d-1} \norm{P_{N_0} v}_{V^2_{\Delta}}^2 \right\} ^{\frac{1}{2}} \norm{v}_{Y^{s_c}}^{m-1} \\
& \lesssim \norm{v}_{Y^{s_c}}^m .
\end{align*}
We skip the proof of the case $Q_0= Q_{> N_0^2}$ because it is the same as above.

\textbf{Subcase 1-2:} $N_{m-1} \sim N_m \gtrsim N_0$.

Firstly, we assume $Q_1=Q_{> N_m^2}$.
>From H\"{o}lder's inequality, the embedding $H^{s_c}(\R ^d) \hookrightarrow L^{(m-1)d}(\R ^d)$, Lemmas \ref{lem:Stv}, \ref{lem:Q}, \eqref{det1}, and \eqref{det2}, we have
\begin{align*}
& \left| \sum _{\substack{N_{m} \in 2^{\N _0}, N_{m} \gtrsim N_0}} \int _{\R ^{1+d}} Q_0 P_{N_0} v_0 Q_{>N_m^2}P_{\le N_m}v \left( \prod _{j=2}^{m-2} Q_j P_{\le N_m} v \right) Q_{m-1} P_{N_m} v Q_{m} P_{N_m} v dx dt \right| \\
& \lesssim \sum _{\substack{N_{m} \in 2^{\N _0}, N_{m} \gtrsim N_0}} \norm{Q_0 P_{N_0} v_0}_{L^{\infty}_t L^{(m-1)d}_x} \norm{Q_{>N_m^2} P_{\le N_m} v}_{L^2_t L^{(m-1)d}_x} \left( \prod _{j=2}^{m-2} \norm{Q_j P_{\le N_m} v}_{L^{\infty}_t L^{(m-1)d}_x} \right) \\
& \hspace*{100pt} \times \norm{Q_{m-1} P_{N_m} v}_{L^4_t L^{\frac{2d}{d-1}}} \norm{Q_{m} P_{N_m} v}_{L^4_t L^{\frac{2d}{d-1}}} \\
& \lesssim \sum _{\substack{N_{m} \in 2^{\N _0}, N_{m} \gtrsim N_0}} N_0^{s_c}N_m^{-1} \norm{P_{N_0} v_0}_{V^2_{\Delta}} \norm{P_{N_m} v}_{V^2_{\Delta}}^2 \norm{v}_{Y^{s_c}}^{m-2} .
\end{align*}
Here, the part $\prod _{j=2}^{m-2} Q_j P_{\le N_m}$ disappears if $m=3$.
Hence,
\begin{align*}
& \left\{ \sum _{N_0 \in 2^{\N _0}} N_0^{2s_c+2} \left| \sum _{\substack{N_m \in 2^{\N _0} \\ N_m \gtrsim N_0}} \int _{\R ^{1+d}} Q_0 P_{N_0} v_0 Q_{>N_m^2}P_{\le N_m}v \left( \prod _{j=2}^{m-2} Q_j P_{\le N_m} \right) Q_{m-1} P_{N_m} v Q_{m} P_{N_m} v dx dt \right| ^2 \right\} ^{\frac{1}{2}} \\
& \lesssim \sum _{N_m \in 2^{\N _0}} \left\{ \sum _{N_0 \in 2^{\N _0}, N_0 \lesssim N_m} N_0^{4s_c+2} \norm{P_{N_0} v_0}_{V^2_{\Delta}} ^2 \right\} ^{\frac{1}{2}} N_m^{-1} \norm{P_{N_m}v}_{V^2_{\Delta}}^2 \norm{v}_{Y^{s_c}}^{m-2} \\
& \lesssim \sum _{N_m \in 2^{\N _0}} N_m^{2s_c} \norm{P_{N_m}v}_{V^2_{\Delta}}^2 \norm{v}_{Y^{s_c}}^{m-2} \\
& \lesssim \norm{v}_{Y^{s_c}}^m.
\end{align*}
Since the case $Q_j =Q_{> N_m^2}$ ($j=0,2,\dots ,m-2$) follows from a similar argument as above, we omit the details.

Secondly, we consider the case where $Q_m = Q_{> N_m^2}$.
>From H\"{o}lder's inequality, Corollary \ref{cor:biStv}, and Lemma \ref{lem:Q}, we have
\begin{align*}
I_{\substack{N_{m-1} \sim N_m \gtrsim N_0 \\ N_1 \le \dots \le N_m}}
& \lesssim \sum _{\substack{N_0,N_1, \dots , N_m \in 2^{\N _0} \\ N_{m-1} \sim N_m \gtrsim N_0 \\ N_1 \le \dots \le N_m}} N_0^{s_c+1} \norm{ \mathfrak{R}_0 v_0 \mathfrak{R}_{m-1}v}_{L^2_{t,x}} \left( \prod _{j=1}^{m-2} \norm{\mathfrak{R}_j v}_{L^{\infty}_{t,x}} \right) \norm{Q_{>N_m^2} P_{N_m} v}_{L^2_{t,x}} \\
& \lesssim \sum _{\substack{N_0,N_1, \dots , N_m \in 2^{\N _0} \\ N_{m-1} \sim N_m \gtrsim N_0 \\ N_1 \le \dots \le N_m}} N_0^{s_c+1} N_0^{\frac{d}{2}-1} \left( \frac{N_0}{N_{m-1}} \right) ^{\frac{1}{2}-\delta} \left( \prod _{j=1}^{m-2} N_j^{\frac{d}{2}} \right) N_m^{-1} \prod _{k=1}^m \norm{P_{N_k} v}_{V^2_{\Delta}} \\
& \lesssim \sum _{N_m \in 2^{\N _0}} N_m^{2s_c} \norm{P_{N_m} v}_{V^2_{\Delta}}^2 \norm{v}_{Y^{s_c}}^{m-2} \\
& \lesssim \norm{v}_{Y^{s_c}}^m.
\end{align*}
Similarly, the case $Q_{m-1}= Q_{> N_m^2}$ follows from the same manner.

\noindent
\textbf{Case 2:} $w_j=z$ ($j=1, \dots , m$) case.

Without loss of generality, we may assume $N_1 \le \dots \le N_m$.
Moreover, $Q_0 =Q_{> \max (N_0^2, N_m^2)}$ holds in this case.

\textbf{Subcase 2-1:} $N_0 \sim N_m$.

By H\"{o}lder's inequality, the Sobolev embedding $H^{\frac{d\delta}{2(2+\delta )}} (\R ^d) \hookrightarrow L^{2+\delta}(\R ^d)$, Lemmas \ref{lem:Q} and \ref{lem:biSt}, we get
\begin{align*}
I_{\substack{N_0 \sim N_m \\ N_1 \le \dots \le N_m}}
& \lesssim
\sum _{\substack{N_0, N_1, \dots , N_m \in 2^{\N _0} \\ N_0 \sim N_m \\ N_1 \le \dots \le N_m}}
N_0^{s_c+1} \norm{Q_{>N_0^2} P_{N_0} v_0}_{L^{2+\delta}_{t,x}} \norm{P_{N_1} z P_{N_m} z}_{L^2_{t,x}} \prod _{j=2}^{m-1} \norm{P_{N_j} z}_{L^{\frac{2(m-2)(2+\delta )}{\delta}}_{t,x}} \\
& \lesssim
\sum _{\substack{N_0, N_1, \dots , N_m \in 2^{\N _0} \\ N_0 \sim N_m \\ N_1 \le \dots \le N_m}}
N_0^{s_c+1} N_0^{-\frac{2}{2+\delta}+\frac{d\delta}{2(2+\delta )}} N_1^{\frac{d}{2}-1} \left( \frac{N_1}{N_m} \right) ^{\frac{1}{2}} \norm{P_{N_0} v_0}_{V^2_{\Delta}} \norm{P_{N_1} \phi ^{\omega}}_{L^2_x} \norm{P_{N_m} \phi ^{\omega}}_{L^2_x} \\
& \hspace*{200pt} \times \prod _{j=2}^{m-1} \norm{P_{N_j} z}_{L^{\frac{2(m-2)(2+\delta )}{\delta}}_{t,x}} \\
& \lesssim
\sum _{\substack{N_0, N_1, \dots , N_m \in 2^{\N _0} \\ N_0 \sim N_m \\ N_1 \le \dots \le N_m}}
N_0^{-s+s_c-\frac{1}{2}+\frac{(d+2)\delta}{2+\delta}} N_1^{-s+\frac{d-1}{2}} \left( \prod _{j=2}^{m-1} N_j^{-s} \right) R^{m} \\
& \lesssim
\sum _{\substack{N_0, N_1 \in 2^{\N _0} \\ N_0 \gtrsim N_1}}
N_0^{-s+s_c-\frac{1}{2}+\frac{(d+2)\delta}{2+\delta}} N_1^{-(m-1)s+\frac{d-1}{2}} R^{m} \\
& \lesssim
R^m
\end{align*}
for $\omega \in E_R^m$.
Here, we have used the fact that $s>0$ in the forth inequality and $s>\max \left( s_c-\frac{1}{2}, \frac{d}{m}-\frac{1}{m-1} \right)$ and $\delta >0$ is sufficiently small in the last inequality.
We note that this lower bound of $s$ is less than $\frac{d-1}{d} s_c$.

\textbf{Subcase 2-2:} $N_{m-1} \sim N_m \gtrsim N_0$.

By H\"{o}lder's inequality, the embedding $H^{\frac{d\delta}{2(2+\delta )}} (\R ^d) \hookrightarrow L^{2+\delta}(\R ^d)$, and Lemma \ref{lem:Q}, we get
\begin{align*}
& I_{\substack{N_{m-1} \sim N_m \gtrsim N_0 \\ N_1 \le \dots \le N_m}} \\
& \lesssim
\sum _{\substack{N_0,N_1, \dots ,N_m \in 2^{\N _0} \\ N_{m-1} \sim N_m \gtrsim N_0 \\ N_1 \le \dots \le N_m}}
N_0^{s_c+1} \norm{Q_{>N_m^2} P_{N_0} v_0}_{L^{2+\delta}_{t,x}} \left( \prod _{j=1}^{m-2} \norm{P_{N_j} z}_{L^{\frac{2(m-2)(2+\delta )}{\delta}}_{t,x}} \right) \norm{P_{N_{m-1}} z}_{L^4_{t,x}} \norm{P_{N_m} z}_{L^4_{t,x}} \\
& \lesssim
\sum _{\substack{N_0,N_1, \dots ,N_m \in 2^{\N _0} \\ N_{m-1} \sim N_m \gtrsim N_0 \\ N_1 \le \dots \le N_m}}
N_0^{s_c+1} N_m^{-\frac{2}{2+\delta}} N_0^{\frac{d\delta}{2(2+\delta )}} \norm{P_{N_0} v_0}_{V^2_{\Delta}} \left( \prod _{j=1}^{m-2} \norm{P_{N_j} z}_{L^{\frac{2(m-2)(2+\delta )}{\delta}}_{t,x}} \right) \norm{P_{N_{m-1}} z}_{L^4_{t,x}} \norm{P_{N_m} z}_{L^4_{t,x}} \\
& \lesssim
\sum _{\substack{N_0,N_1, \dots ,N_{m-1} \in 2^{\N _0} \\ N_{m-1} \gtrsim N_0 \\ N_1 \le \dots \le N_{m-1}}}
N_0^{s_c+1+\frac{d\delta}{2(2+\delta )}} N_{m-1}^{-2s-\frac{2}{2+\delta}} \left( \prod _{j=1}^{m-2} N_j^{-s} \right) R^m \\
& \lesssim
R^m
\end{align*}
for $\omega \in E_R^m$.
Here, we have used the fact that $s>\frac{s_c}{2}$, and $\delta >0$ is sufficiently small in the last inequality.

\noindent
\textbf{Case 3:} The case where there exists $l \in \{ 1, \dots ,m-1\}$ such that $w_j=v$ for $1\le j \le l$ and $w_k=z$ for $l+1 \le k \le m$.

Without loss of generality, we assume $N_1 \le \dots \le N_l$ and $N_{l+1} \le \dots \le N_m$.
We further split the proof into five subcases.

\textbf{Subcase 3-1:} $N_0 \sim N_l \gtrsim N_m$

Firstly, we assume $Q_0= Q_{>N_0^2}$ and $l=1$.
By H\"{o}lder's inequality, the Sobolev embedding $H^{\frac{d\delta}{2(2+\delta )}} (\R ^d) \hookrightarrow L^{2+\delta}(\R ^d)$, Lemma \ref{lem:Q}, and Corollary \ref{cor:biStv}, we get
\begin{align*}
I_{\substack{N_1 \sim N_0 \gtrsim N_m \\ N_2 \le \dots \le N_m}}
& \lesssim
\sum _{\substack{N_0,N_1, \dots , N_m \in 2^{\N _0} \\ N_1 \sim N_0 \gtrsim N_m \\ N_2 \le \dots \le N_m}}
N_0^{s_c+1} \norm{Q_{>N_0^2} P_{N_0} v_0}_{L^{2+\delta}_{t,x}} \norm{\mathfrak{R}_1 v \mathfrak{R}_m z}_{L^2_{t,x}} \prod _{k=2}^{m-1} \norm{\mathfrak{R}_k z}_{L^{\frac{2(m-2)(2+\delta )}{\delta}}_{t,x}} \\
& \lesssim
\sum _{\substack{N_0,N_1, \dots , N_m \in 2^{\N _0} \\ N_1 \sim N_0 \gtrsim N_m \\ N_2 \le \dots \le N_m}}
N_0^{s_c+1} N_0^{-\frac{2}{2+\delta}+\frac{d\delta}{2(2+\delta )}} N_{m}^{\frac{d}{2}-1} \left( \frac{N_m}{N_1} \right) ^{\frac{1}{2}-\delta} \norm{P_{N_0} v_0}_{V^2_{\Delta}} \norm{P_{N_1} v}_{V^2_{\Delta}} \\
& \hspace*{100pt} \times \norm{\phi ^{\omega}}_{L^2_x} \prod _{k=2}^{m-1} \norm{\mathfrak{R}_k z}_{L^{\frac{2(m-2)(2+\delta )}{\delta}}_{t,x}} \\
& \lesssim
\sum _{\substack{N_0, N_m \in 2^{\N _0} \\ N_m \lesssim N_0}}
N_0^{-\frac{1}{2}+\frac{(d+2)\delta}{2+\delta}+\delta} N_m^{-s+\frac{d-1}{2}-\delta} \norm{v}_{Y^{s_c}} R^{m-1} \\
& \lesssim
\norm{v}_{Y^{s_c}}^l R^{m-l}
\end{align*}
for $\omega \in E^m_R$.
Here, we have used the fact that $s> \frac{d}{2}-1$ and $\delta >0$ is sufficiently small in the last inequality.
We note that $\frac{d-1}{d} s_c >\frac{d}{2}-1$ if $m \ge 3$.
Since the case $Q_1= Q_{>N_0^2}$ and $l=1$ is similarly handled, we omit the details.

Secondly, we consider the case $Q_0= Q_{>N_0^2}$ and $l \ge 2$.
By H\"{o}lder's inequality, the Sobolev embedding $H^{\frac{d\delta}{2(2+\delta )}} (\R ^d) \hookrightarrow L^{2+\delta}(\R ^d)$, Lemma \ref{lem:Q}, and Corollary \ref{cor:biStv}, we get
\begin{align*}
& I_{\substack{N_l \sim N_0 \gtrsim N_m \\ N_1 \le \dots \le N_l \\ N_{l+1} \le \dots \le N_m}} \\
& \lesssim
\sum _{\substack{N_0,N_1, \dots , N_m \in 2^{\N _0} \\ N_l \sim N_0 \gtrsim N_m \\ N_1 \le \dots \le N_l \\ N_{l+1} \le \dots \le N_m}}
N_0^{s_c+1} \norm{Q_{>N_0^2} P_{N_0} v_0}_{L^{2+\delta}_{t,x}} \left( \prod _{j=1}^{l-2} \norm{\mathfrak{R}_j v}_{L^{\infty}_{t,x}} \right) \norm{\mathfrak{R}_{l-1} v \mathfrak{R}_l v}_{L^2_{t,x}} \prod _{k=l+1}^m \norm{\mathfrak{R}_k z}_{L^{\frac{2(m-l)(2+\delta )}{\delta}}_{t,x}} \\
& \lesssim
\sum _{\substack{N_0,N_1, \dots , N_m \in 2^{\N _0} \\ N_l \sim N_0 \gtrsim N_m \\ N_1 \le \dots \le N_l \\ N_{l+1} \le \dots \le N_m}}
N_0^{s_c+1} N_0^{-\frac{2}{2+\delta}+\frac{d\delta}{2(2+\delta )}} \left( \prod _{j=1}^{l-2} N_j^{\frac{d}{2}} \right) N_{l-1}^{\frac{d}{2}-1} \left( \frac{N_{l-1}}{N_l} \right) ^{\frac{1}{2}-\delta} \norm{P_{N_0} v_0}_{V^2_{\Delta}} \\
& \hspace*{100pt} \times \left( \prod _{j=1}^{l} \norm{P_{N_j} v}_{V^2_{\Delta}} \right) \prod _{k=l+1}^m \norm{\mathfrak{R}_k z}_{L^{\frac{2(m-l)(2+\delta )}{\delta}}_{t,x}} \\
& \lesssim
\sum _{\substack{N_0,N_1, \dots , N_m \in 2^{\N _0} \\ N_l \sim N_0 \gtrsim N_m \\ N_1 \le \dots \le N_l \\ N_{l+1} \le \dots \le N_m}}
N_0^{-\frac{1}{2}+\frac{(d+2) \delta}{2+\delta}+\delta} \left( \prod _{j=1}^{l-2} N_j^{\frac{1}{m-1}} \right) N_{l-1}^{\frac{1}{m-1}-\frac{1}{2}-\delta} \left( \prod _{k=l+1}^m N_k^{-s} \right)  \norm{v}_{Y^{s_c}}^l R^{m-l} \\
& \lesssim
\sum _{\substack{N_0, N_{l-1} \in 2^{\N _0} \\ N_{l-1} \lesssim N_0}}
N_0^{-\frac{1}{2}+\frac{(d+2) \delta}{2+\delta}+\delta} N_{l-1}^{\frac{l-1}{m-1}-\frac{1}{2}-\delta} \norm{v}_{Y^{s_c}}^l R^{m-l} \\
& \lesssim
\norm{v}_{Y^{s_c}}^l R^{m-l}
\end{align*}
for $\omega \in E^m_R$.
The part $\prod _{j=1}^{l-2} \norm{\mathfrak{R}_j v}_{L^{\infty}_{t,x}}$ disappears when $m=2$.
If $Q_l=Q_{>N_0^2}$, applying a similar argument as above, we obtain the desired bound.

Thirdly, we consider the case $Q_1= Q_{>N_0^2}$ and $l \ge 2$.
For $\omega \in E^m_R$ and $s>0$,
\begin{equation} \label{Zsim}
\begin{aligned}
\norm{P_{\lesssim N_0} z}_{L^{\frac{4(m-l)(4+\delta)}{\delta}}_t L^{\frac{2(m-1)(m-l)(4+\delta )d}{(m-1)(8+\delta )-2(4+\delta )(l-1)}}_x}
& \lesssim \sum _{N \in 2^{\N _0}, N \lesssim N_0} \norm{P_{N} z}_{L^{\frac{4(m-l)(4+\delta)}{\delta}}_t L^{\frac{2(m-1)(m-l)(4+\delta )d}{(m-1)(8+\delta )-2(4+\delta )(l-1)}}_x} \\
& \lesssim \sum _{N \in 2^{\N _0}, N \lesssim N_0} N^{-s} R
\lesssim R .
\end{aligned}
\end{equation}
>From H\"{o}lder's inequality, the embedding $H^{s_c}(\R ^d) \hookrightarrow L^{(m-1)d}(\R ^d)$, Lemmas \ref{lem:Stv}, \ref{lem:Q}, \eqref{det1}, \eqref{det2}, and \eqref{Zsim}, we have
\begin{align*}
& \left\{ \sum _{N_0 \in 2^{\N _0}} N_0^{2s_c+2} \left| \int _{\R ^{1+d}} Q_0 P_{N_0} v_0 Q_{>N_0^2}P_{\lesssim N_0}v \left( \prod _{j=2}^{l-1} Q_j P_{\lesssim N_0} v \right) Q_l P_{N_0} v \prod _{k=l+1}^m Q_k P_{\lesssim N_0} z dx dt \right| ^2 \right\} ^{\frac{1}{2}} \\
& \lesssim \left\{ \sum _{N_0 \in 2^{\N _0}} N_0^{2s_c+2} \norm{Q_0 P_{N_0} v_0}_{L^4_t L^{\frac{2d}{d-1}}_x}^2 \norm{Q_{>N_0^2} P_{\lesssim N_0} v}_{L^2_t L^{(m-1)d}_x}^2 \left( \prod _{j=2}^{l-1} \norm{Q_j P_{\lesssim N_0} v}_{L^{\infty}_t L^{(m-1)d}_x}^2 \right) \right. \\
& \hspace*{100pt} \left. \times  \norm{Q_l P_{N_0} v}_{L^{4+\delta}_t L^{\frac{2(4+\delta )d}{(4+\delta )d-4}}_x}^2 \prod _{k={l+1}}^m \norm{P_{\lesssim N_0} z}_{L^{\frac{4(m-l) (4+\delta )}{\delta}}_t L^{\frac{2(m-1)(m-l)(4+\delta )d}{(m-1)(8+\delta )-2(4+\delta )(l-1)}}_x} \right\} ^{\frac{1}{2}} \\
& \lesssim \left\{ \sum _{N_0 \in 2^{\N _0}} N_0^{2s_c} \norm{P_{N_0} v}_{V^2_{\Delta}}^2 \right\} ^{\frac{1}{2}} \norm{v}_{Y^{s_c}}^{l-1} R^{m-l} \\
& \lesssim \norm{v}_{Y^{s_c}}^{l} R^{m-l}
\end{align*}
for $\omega \in E^m_R$.
The proof of the remaining cases $Q_j= Q_{>N_0^2}$ ($j=2, \dots , l-1$) follows from the same manner.

\textbf{Subcase 3-2:} $N_0 \sim N_m \gtrsim N_l$.

The lower bound $\max \left( \frac{d-1}{d} s_c, s_c - \frac{d}{2(d+1)} \right)$ of Lemma \ref{lem:nonest} appears in this case.
We further divide the proof into two subcases.

\textbf{Subsubcase 3-2-1:} $N_0 \sim N_m \gtrsim N_l$ and $Q_0 = Q_{>N_0^2}$.

When $l \le m-2$, from H\"{o}lder's inequality, the embedding $H^{\frac{d\delta}{2(2+\delta )}} (\R ^d) \hookrightarrow L^{2+\delta}(\R ^d)$, Lemma \ref{lem:Q},  Corollary \ref{cor:biStv}, and $m \ge 3$, we have
\begin{align*}
& I_{\substack{N_0 \sim N_m \gtrsim N_l \\ N_1 \le \dots \le N_l \\ N_{l+1} \le \dots \le N_m}} \\
& \lesssim
\sum _{\substack{N_0,N_1, \dots , N_m \in 2^{\N _0} \\ N_0 \sim N_m \gtrsim N_l \\ N_1 \le \dots \le N_l \\ N_{l+1} \le \dots \le N_m}}
N_0^{s_c+1} \norm{Q_{>N_0^2} P_{N_0} v_0}_{L^{2+\delta}_{t,x}} \left( \prod _{j=1}^{l-1} \norm{\mathfrak{R}_j v}_{L^{\infty}_{t,x}} \right) \norm{\mathfrak{R}_l v \mathfrak{R}_{m} z }_{L^2_{t,x}} \prod _{k=l+1}^{m-1} \norm{\mathfrak{R}_k z}_{L^{\frac{2(m-l-1)(2+\delta )}{\delta}}_{t,x}} \\
& \lesssim
\sum _{\substack{N_0,N_1, \dots , N_m \in 2^{\N _0} \\ N_0 \sim N_m \gtrsim N_l \\ N_1 \le \dots \le N_l \\ N_{l+1} \le \dots \le N_m}}
N_0^{s_c+1} N_0^{-\frac{2}{2+\delta}+\frac{d\delta}{2(2+\delta )}} \left( \prod _{j=1}^{l-1} N_j^{\frac{d}{2}} \right) N_l^{\frac{d}{2}-1} \left( \frac{N_l}{N_m} \right) ^{\frac{1}{2}-\delta} \norm{P_{N_0} v_0}_{V^2_{\Delta}} \\
& \hspace*{50pt} \times \left( \prod _{j=1}^l \norm{P_{N_j} v}_{V^2_{\Delta}} \right) \norm{\phi ^{\omega}}_{L^2_x} \prod _{k=l+1}^{m-1} \norm{\mathfrak{R}_k z}_{L^{\frac{2(m-l-1)(2+\delta )}{\delta}}_{t,x}} \\
& \lesssim
\sum _{\substack{N_0,N_1, \dots , N_l \in 2^{\N _0} \\ N_1 \le \dots \le N_l \lesssim N_0}}
N_0^{-s+s_c-\frac{1}{2}+\frac{(d+2) \delta}{2+\delta}+\delta} \left( \prod _{j=1}^{l-1} N_j^{\frac{1}{m-1}} \right) N_l^{\frac{1}{m-1}-\frac{1}{2}-\delta} \norm{v}_{Y^{s_c}}^l R^{m-l} \\
& \lesssim
\sum _{\substack{N_0,N_{l-1}, N_l \in 2^{\N _0} \\ N_{l-1} \le N_l \lesssim N_0}}
N_0^{-s+s_c-\frac{1}{2}+\frac{(d+2)\delta}{2+\delta}+\delta} N_{l-1}^{\frac{l-1}{m-1}} N_l^{\frac{1}{m-1}-\frac{1}{2}-\delta} \norm{v}_{Y^{s_c}}^l R^{m-l} \\
& \lesssim
\sum _{\substack{N_0,N_{l-1} \in 2^{\N _0} \\ N_{l-1} \lesssim N_0}}
N_0^{-s+s_c-\frac{1}{2}+\frac{(d+2)\delta}{2+\delta}+\delta} N_{l-1}^{\frac{l}{m-1}-\frac{1}{2}-\delta} \norm{v}_{Y^{s_c}}^l R^{m-l}
\end{align*}
for $\omega \in E^m_R$.
When $l=1$, the part $\prod _{j=1}^{l-1} \norm{\mathfrak{R}_j v}_{L^{\infty}_{t,x}}$ disappears.
If $l=m-1$, replacing $\norm{Q_{>N_0^2} P_{N_0} v_0}_{L^{2+\delta}_{t,x}}$ with $\norm{Q_{>N_0^2} P_{N_0} v_0}_{L^2_{t,x}}$, we get the same bound as above.
We note that $\prod _{k=l+1}^{m-1} \norm{\mathfrak{R}_j z}_{L^{\frac{2(m-l-1)(2+\delta )}{\delta}}_{t,x}}$ disappears in this case.

The sum is bounded by $\norm{v}_{Y^{s_c}} R^{m-1}$ if $l=1$, $m \ge 3$, $s>s_c-\frac{1}{2}$, and $\delta >0$ is sufficiently small.
Moreover, if $l \ge 2$, $N_{l-1} \lesssim N_0^{\frac{1}{d+1}}$ and $s> \max \left( s_c-\frac{1}{2}, s_c-\frac{d}{2(d+1)} \right)$, we get the same bound.

We consider the case $l \ge 2$ and $N_{l-1} \gtrsim N_0^{\frac{1}{d+1}}$.
>From H\"{o}lder's inequality, the embeddings $W^{\frac{d}{2}-1, \frac{2d}{d-1}} (\R ^d) \hookrightarrow L^{2d}(\R ^d)$, $H^{\frac{d\delta}{2(2+\delta )}} (\R ^d) \hookrightarrow L^{2+\delta}(\R ^d)$, Lemmas \ref{lem:Stv}, and \ref{lem:Q}, we have
\begin{align*}
& I_{\substack{N_0 \sim N_m \gtrsim N_l \\ N_{l-1} \gtrsim N_0^{\frac{1}{d+1}} \\ N_1 \le \dots \le N_l \\ N_{l+1} \le \dots \le N_m}} \\
& \lesssim
\sum _{\substack{N_0,N_1, \dots , N_m \in 2^{\N _0} \\ N_0 \sim N_m \gtrsim N_l \\ N_{l-1} \gtrsim N_0^{\frac{1}{d+1}} \\ N_1 \le \dots \le N_l \\ N_{l+1} \le \dots \le N_m}}
N_0^{s_c+1} \norm{Q_{>N_0^2} P_{N_0} v_0}_{L^{2+\delta}_{t,x}} \left( \prod _{j=1}^{l-2} \norm{\mathfrak{R}_j v}_{L^{\infty}_{t,x}} \right) \norm{\mathfrak{R}_{l-1} v}_{L^4_t L^{\frac{2d}{d-1}}_x} \norm{\mathfrak{R}_l v}_{L^4_t L^{2d}_x} \\
& \hspace*{100pt} \times \prod _{k=l+1}^m \norm{\mathfrak{R}_k z}_{L^{\frac{2(m-l)(2+\delta )}{\delta}}_{t,x}} \\
& \lesssim
\sum _{\substack{N_0,N_1, \dots , N_m \in 2^{\N _0} \\ N_0 \sim N_m \gtrsim N_l \\ N_{l-1} \gtrsim N_0^{\frac{1}{d+1}} \\ N_1 \le \dots \le N_l \\ N_{l+1} \le \dots \le N_m}}
N_0^{s_c+1} N_0^{-\frac{2}{2+\delta}+\frac{d\delta}{2(2+\delta )}} \left( \prod _{j=1}^{l-2} N_j^{\frac{d}{2}} \right) N_l^{\frac{d}{2}-1} \left( \prod _{j=1}^l \norm{P_{N_j} v}_{V^2_{\Delta}} \right) \prod _{k=l+1}^m \norm{\mathfrak{R}_k z}_{L^{\frac{2(m-l)(2+\delta )}{\delta}}_{t,x}} \\
& \lesssim
\sum _{\substack{N_0,N_1, \dots , N_l \in 2^{\N _0} \\ N_{l-1} \gtrsim N_0^{\frac{1}{d+1}} \\ N_1 \le \dots \le N_l \lesssim N_0}}
N_0^{-s+s_c+\frac{(d+2)\delta}{2(2+\delta )}} \left( \prod _{j=1}^{l-2} N_j^{\frac{1}{m-1}} \right) N_{l-1}^{-s_c} N_l^{\frac{1}{m-1}-1} \norm{v}_{Y^{s_c}}^l R^{m-l} \\
& \lesssim
\sum _{\substack{N_0N_{l-1} \in 2^{\N _0} \\ N_{l-1} \gtrsim N_0^{\frac{1}{d+1}}}}
N_0^{-s+s_c+\frac{(d+2)\delta}{2(2+\delta )}} N_{l-1}^{-s_c-\frac{m-l}{m-1}} \norm{v}_{Y^{s_c}}^l R^{m-l} \\
& \lesssim
\norm{v}_{Y^{s_c}}^l R^{m-l}
\end{align*}
for $\omega \in E^m_R$.
Here, we have used the fact that $s>s_c-\frac{d}{2(d+1)}$ and $\delta >0$ is sufficiently small in the last inequality.

\textbf{Subsubcase 3-2-2:} $N_0 \sim N_m \gtrsim N_l$ and $Q_0 = Q_{\ll N_0^2}$.

We only consider the case $Q_1= Q_{>N_0^2}$ because the remaining cases are similarly handled.
Firstly, we consider the case $l \ge 3$.
>From H\"{o}lder's inequality, the embeddings $H^{s_c}(\R ^d) \hookrightarrow L^{(m-1)d}(\R ^d)$, $H^{\frac{d}{2}-\frac{(1-2\delta )(m-1)d-2}{2(m-1)}} (\R ^d) \hookrightarrow L^{\frac{2(m-1)d}{(1-2\delta )(m-1)d-2}}(\R ^d)$, Lemma \ref{lem:Q}, and Corollary \ref{cor:biStv} we have
\begin{align*}
I_{\substack{N_0 \sim N_m \gtrsim N_l \\ N_1 \le \dots \le N_l \\ N_{l+1} \le \dots \le N_m}}
& \lesssim
\sum _{\substack{N_0,N_1, \dots , N_m \in 2^{\N _0} \\ N_0 \sim N_m \gtrsim N_l \\ N_1 \le \dots \le N_l \\ N_{l+1} \le \dots \le N_m}}
N_0^{s_c+1} \norm{\mathfrak{R}_0 v_0 \mathfrak{R}_{l} v}_{L^2_{t,x}} \norm{Q_{>N_0^2} P_{N_1} v}_{L^{2+\delta}_t L^{(m-1)d}_x} \left( \prod _{j=2}^{l-2} \norm{\mathfrak{R}_j v}_{L^{\infty}_{t,x}} \right) \\
& \hspace*{50pt} \times \norm{P_{N_{l-1}} v}_{L^{\infty}_t L^{\frac{2(m-1)d}{(1-2\delta )(m-1)d-2}}_x} \left( \prod _{k=l+1}^{m} \norm{\mathfrak{R}_k z}_{L^{\frac{2(m-l)(2+\delta )}{\delta}}_t L^{\frac{m-l}{\delta}}_x} \right) \\
& \lesssim
\sum _{\substack{N_0,N_1, \dots , N_m \in 2^{\N _0} \\ N_0 \sim N_m \gtrsim N_l \\ N_1 \le \dots \le N_l \\ N_{l+1} \le \dots \le N_m}}
N_0^{s_c+1} N_l^{\frac{d}{2}-1} \left( \frac{N_l}{N_0} \right) ^{\frac{1}{2}-\delta} N_0^{-\frac{2}{2+\delta}} N_1^{s_c} \left( \prod _{j=2}^{l-2} N_j^{\frac{d}{2}} \right) N_{l-1}^{\frac{d}{2}-\frac{(1-2\delta )(m-1)d-2}{2(m-1)}} \\
& \hspace*{50pt} \times \norm{P_{N_0} v_0}_{V^2_{\Delta}} \left( \prod _{j=1}^l \norm{P_{N_j} v}_{V^2_{\Delta}} \right) \left( \prod _{k=l+1}^{m} \norm{\mathfrak{R}_k z}_{L^{\frac{2(m-l)(2+\delta )}{\delta}}_t L^{\frac{m-l}{\delta}}_x} \right) \\
& \lesssim
\sum _{\substack{N_0,N_1, \dots , N_m \in 2^{\N _0} \\ N_0 \sim N_m \gtrsim N_l \\ N_1 \le \dots \le N_l \\ N_{l+1} \le \dots \le N_m}}
N_0^{-s+s_c-\frac{1}{2}+\frac{\delta}{2+\delta}+2\delta} N_l^{\frac{1}{m-1}-\frac{1}{2}-\delta}  \left( \prod _{j=2}^{l-2} N_j^{\frac{1}{m-1}} \right) N_{l-1}^{-s_c +\frac{1}{m-1}+d \delta } \norm{v}_{Y^{s_c}}^l R^{m-l} \\
& \lesssim
\sum _{\substack{N_0,N_{l-1} \in 2^{\N _0} \\ N_0 \gtrsim N_{l-1}}}
N_0^{-s+s_c-\frac{1}{2}+\frac{\delta}{2+\delta}+2\delta} N_{l-1}^{-s_c-\frac{1}{2} +\frac{l-1}{m-1}+(d-1) \delta } \norm{v}_{Y^{s_c}}^l R^{m-l} \\
& \lesssim
\norm{v}_{Y^{s_c}}^l R^{m-l}
\end{align*}
for $\omega \in E^m_R$.
Here, we have used the fact that $s>s_c-\frac{1}{2}$ and $\delta >0$ is sufficiently small in the last inequality.

Secondly, we consider the case $l=2$.
>From H\"{o}lder's inequality, the embedding $H^{\frac{d\delta}{2(2+\delta )}} (\R ^d) \hookrightarrow L^{2+\delta}(\R ^d)$, Lemma \ref{lem:Q}, and Corollary \ref{cor:biStv} we have
\begin{align*}
I_{\substack{N_0 \sim N_m \gtrsim N_2 \\ N_1 \le N_2 \\ N_3 \le \dots \le N_m}}
& \lesssim
\sum _{\substack{N_0,N_1, \dots , N_m \in 2^{\N _0} \\ N_0 \sim N_m \gtrsim N_2 \\ N_1 \le N_2 \\ N_3 \le \dots \le N_m}}
N_0^{s_c+1} \norm{\mathfrak{R}_0 v_0 \mathfrak{R}_2 v}_{L^2_{t,x}} \norm{Q_{>N_0^2} P_{N_1} v}_{L^{2+\delta}_{t,x}} \prod _{k=3}^m \norm{\mathfrak{R}_k z}_{L^{\frac{2(m-2)(2+\delta )}{\delta}}_{t,x}} \\
& \lesssim
\sum _{\substack{N_0,N_1, \dots , N_m \in 2^{\N _0} \\ N_0 \sim N_m \gtrsim N_2 \\ N_1 \le N_2 \\ N_3 \le \dots \le N_m}}
N_0^{s_c+1} N_2^{\frac{d}{2}-1} \left( \frac{N_2}{N_0} \right) ^{\frac{1}{2}-\delta} N_0^{-\frac{2}{2+\delta}} N_1^{\frac{d\delta}{2(2+\delta )}} \norm{P_{N_0} v_0}_{V^2_{\Delta}} \left( \prod _{j=1}^2 \norm{P_{N_j} v}_{V^2_{\Delta}} \right) \\
& \hspace*{200pt} \times \prod _{k=3}^{m} \norm{\mathfrak{R}_k z}_{L^{\frac{2(m-l)(2+\delta )}{\delta}}_{t,x}} \\
& \lesssim
\sum _{\substack{N_0,N_1,N_2 \in 2^{\N _0} \\ N_1 \le N_2 \lesssim N_0}}
N_0^{-s+s_c-\frac{1}{2}+\frac{\delta}{2+\delta}+\delta} N_2^{\frac{1}{m-1}-\frac{1}{2}-\delta} N_1^{-s_c+\frac{d\delta}{2(2+\delta )}} \norm{v}_{Y^{s_c}}^2 R^{m-2} \\
& \lesssim
\norm{v}_{Y^{s_c}}^2 R^{m-2}
\end{align*}
for $\omega \in E^m_R$.
Here, we have used the fact that $m \ge 3$, $s>s_c-\frac{1}{2}$, and $\delta >0$ is sufficiently small in the last inequality.

Thirdly, we assume $l=1$ and $N_{m-1} \gtrsim N_0^{\frac{1}{d-1}}$.
>From H\"{o}lder's inequality, the embeddings $H^{s_c}(\R ^d) \hookrightarrow L^{(m-1)d}(\R ^d)$, $H^{\frac{d\delta}{2(2+\delta )}} (\R ^d) \hookrightarrow L^{2+\delta}(\R ^d)$, Lemmas \ref{lem:Stv}, and \ref{lem:Q}, we have
\begin{align*}
I_{\substack{N_0 \sim N_m \gtrsim N_1 \\ N_{m-1} \gtrsim N_0^{\frac{1}{d-1}} \\ N_2 \le \dots \le N_m}}
& \lesssim
\sum _{\substack{N_0,N_1, \dots , N_m \in 2^{\N _0} \\ N_0 \sim N_m \gtrsim N_1 \\ N_{m-1} \gtrsim N_0^{\frac{1}{d-1}} \\ N_2 \le \dots \le N_m}}
N_0^{s_c+1} \norm{\mathfrak{R}_0 v_0}_{L^4_t L^{\frac{2d}{d-1}}} \norm{Q_{>N_0^2} P_{N_1} v}_{L^2_t L^{(m-1)d}_x} \prod _{k=2}^m \norm{\mathfrak{R}_k z}_{L^{4(m-1)}_t L^{\frac{2(m-1)^2d}{(m-1)d+m-3}}_x} \\
& \lesssim
\sum _{\substack{N_0,N_1, \dots , N_m \in 2^{\N _0} \\ N_0 \sim N_m \gtrsim N_1 \\ N_{m-1} \gtrsim N_0^{\frac{1}{d-1}} \\ N_2 \le \dots \le N_m}}
N_0^{s_c+1} N_0^{-1} N_1^{s_c} \norm{P_{N_0} v_0}_{V^2_{\Delta}} \norm{P_{N_1} v}_{V^2_{\Delta}} \prod _{k=2}^m \norm{\mathfrak{R}_k z}_{L^{4(m-1)}_t L^{\frac{2(m-1)^2d}{(m-1)d+m-3}}_x} \\
& \lesssim
\sum _{\substack{N_0,N_{m-1} \in 2^{\N _0} \\ N_{m-1} \gtrsim N_0^{\frac{1}{d-1}}}}
N_0^{-s+s_c+\delta} N_{m-1}^{-s} \norm{v}_{Y^{s_c}} R^{m-1} \\
& \lesssim
\sum _{\substack{N_0 \in 2^{\N _0}}}
N_0^{-\frac{d}{d-1}s+s_c+\delta} \norm{v}_{Y^{s_c}} R^{m-1} \\
& \lesssim
\norm{v}_{Y^{s_c}} R^{m-1}
\end{align*}
for $\omega \in E^m_R$.
Here, we have used the fact that $s> \frac{d-1}{d}s_c$ and $\delta >0$ is sufficiently small in the last inequality.

Fourthly, we consider the case $l=1$ and $N_{m-1} \lesssim N_0^{\frac{1}{d-1}}$.
>From H\"{o}lder's inequality, the embedding $H^{s_c}(\R ^d) \hookrightarrow L^{(m-1)d}(\R ^d)$, Lemma \ref{lem:Q}, and Corollary \ref{cor:biStv} we have
\begin{align*}
I_{\substack{N_0 \sim N_m \gtrsim N_1 \\ N_{m-1} \lesssim N_0^{\frac{1}{d-1}} \\ N_2 \le \dots \le N_m}}
& \lesssim
\sum _{\substack{N_0,N_1, \dots , N_m \in 2^{\N _0} \\ N_0 \sim N_m \gtrsim N_1 \\ N_{m-1} \lesssim N_0^{\frac{1}{d-1}} \\ N_2 \le \dots \le N_m}}
N_0^{s_c+1} \norm{\mathfrak{R}_0 v_0 \mathfrak{R}_{m-1} z}_{L^2_{t,x}} \norm{Q_{>N_0^2} P_{N_1} v}_{L^{2+\delta}_t L^{(m-1)d}_x} \\
& \hspace*{50pt} \times \left( \prod _{k=2}^{m-2} \norm{\mathfrak{R}_k z}_{L^{\frac{2(m-2)(2+\delta )}{\delta}}_t L^{\frac{2(m-1)(m-2)d}{(m-1)d-2}}_x} \right) \norm{\mathfrak{R}_m z}_{L^{\frac{2(m-2)(2+\delta )}{\delta}}_t L^{\frac{2(m-1)(m-2)d}{(m-1)d-2}}_x} \\
& \lesssim
\sum _{\substack{N_0,N_1, \dots , N_m \in 2^{\N _0} \\ N_0 \sim N_m \gtrsim N_1 \\ N_{m-1} \lesssim N_0^{\frac{1}{d-1}} \\ N_2 \le \dots \le N_m}}
N_0^{s_c+1} N_{m-1}^{\frac{d}{2}-1} \left( \frac{N_{m-1}}{N_0} \right) ^{\frac{1}{2}-\delta} N_0^{-\frac{2}{2+\delta}} N_1^{s_c} \norm{P_{N_0} v_0}_{V^2_{\Delta}} \norm{P_{N_1} v}_{V^2_{\Delta}} \\
& \hspace*{30pt} \times \norm{\phi ^{\omega}}_{L^2_x} \left( \prod _{k=2}^{m-2} \norm{\mathfrak{R}_k z}_{L^{\frac{2(m-2)(2+\delta )}{\delta}}_t L^{\frac{2(m-1)(m-2)d}{(m-1)d-2}}_x} \right) \norm{\mathfrak{R}_m z}_{L^{\frac{2(m-2)(2+\delta )}{\delta}}_t L^{\frac{2(m-1)(m-2)d}{(m-1)d-2}}_x}  \\
& \lesssim
\sum _{\substack{N_0,N_{m-1} \in 2^{\N _0} \\ N_{m-1} \lesssim N_0^{\frac{1}{d-1}}}}
N_0^{-s+s_c-\frac{1}{2}+\frac{\delta}{2+\delta}+2\delta} N_{m-1}^{-s+\frac{d-1}{2}-\delta} \norm{v}_{Y^{s_c}} R^{m-1} \\
& \lesssim
\norm{v}_{Y^{s_c}} R^{m-1}
\end{align*}
for $\omega \in E^m_R$.
When $m=3$, the part $\prod _{k=2}^{m-2} \norm{\mathfrak{R}_k z}_{L^{\frac{2(m-2)(2+\delta )}{\delta}}_t L^{\frac{2(m-1)(m-2)d}{(m-1)d-2}}_x}$ disappears.
Here, we have used the fact that $s>\max \left( s_c-\frac{1}{2}, \frac{d-1}{d}s_c \right)$ and $\delta >0$ is sufficiently small in the last inequality.

\textbf{Subcase 3-3:} $N_{l-1} \sim N_l \gtrsim N_0, N_m$

We assume $l \ge 2$ because this is reduced the subcase 3-1 when $l=1$.

Firstly, we consider the case $Q_0=Q_{>N_l^2}$.
>From H\"{o}lder's inequality, the embedding $H^{s_c}(\R ^d) \hookrightarrow L^{(m-1)d}(\R ^d)$, Lemmas \ref{lem:Stv}, \ref{lem:Q}, \eqref{det1}, \eqref{det2}, and \eqref{Zsim}, we have
\begin{align*}
& \left| \int _{\R ^{1+d}} Q_{>N_l^2} P_{\lesssim N_l}v \left( \prod _{j=1}^{l-2} Q_j P_{\lesssim N_l} v \right) Q_{l-1} P_{N_l} v Q_l P_{N_l} v \left( \prod _{k=l+1}^m Q_k P_{\lesssim N_l} z \right) dx dt \right| \\
& \lesssim
\norm{Q_{>N_l^2} P_{N_0} v_0}_{L^2_t L^{(m-1)d}_x} \left( \prod _{j=1}^{l-2} \norm{Q_j P_{\lesssim N_l} v}_{L^{\infty}_t L^{(m-1)d}_x} \right) \\
& \hspace*{30pt} \times \norm{Q_{l-1} P_{N_l} v}_{L^4_t L^{\frac{2d}{d-1}}_x} \norm{Q_l P_{N_l} v}_{L^{4+\delta}_t L^{\frac{2(4+\delta )d}{(4+\delta )d-4}}_x} \prod _{k=l+1}^m \norm{Q_k P_{\lesssim N_l} z}_{L^{\frac{4(m-l)(4+\delta )}{\delta}}_t L^{\frac{2(m-1)(m-l)(4+\delta )d}{(m-1)(8+\delta )-2(4+\delta )(l-1)}}_x} \\
& \lesssim
N_0^{s_c} N_l^{-1} \norm{P_{N_0} v_0}_{V^2_{\Delta}} \norm{P_{N_l} v}_{V^2_{\Delta}}^2 \norm{v}_{Y^{s_c}}^{l-2} R^{m-l}
\end{align*}
for $\omega \in E^m_R$.
Accordingly, we obtain
\begin{align*}
& \left\{ \sum _{N_0 \in 2^{\N _0}} N_0^{2s_c+2} \left| \sum _{\substack{N_l \in 2^{\N _0} \\ N_0 \lesssim N_l}} \int _{\R ^{1+d}} Q_0 P_{N_0} v_0 Q_{>N_l^2}P_{\lesssim N_l}v \left( \prod _{j=2}^l Q_j P_{\lesssim N_l} v \right) \left( \prod _{k=l+1}^m Q_k P_{\lesssim N_l} z \right) dx dt \right| ^2 \right\} ^{\frac{1}{2}} \\
& \lesssim
\sum _{N_l \in 2^{\N _0}} \left\{ \sum _{N_0 \in 2^{\N _0}} N_0^{4s_c+2} \right\} ^{\frac{1}{2}} N_l^{-1} \norm{P_{N_l} v}_{V^2_{\Delta}}^2 \norm{v}_{Y^{s_c}}^{l-2} R^{m-l} \\
& \lesssim
\sum _{N_l \in 2^{\N _0}} N_l^{2s_c} \norm{P_{N_l} v}_{V^2_{\Delta}}^2 \norm{v}_{Y^{s_c}}^{l-2} R^{m-l} \\
& \lesssim \norm{v}_{Y^{s_c}}^l R^{m-l}
\end{align*}
for $\omega \in E^m_R$.
Since the case where $Q_j=Q_{>N_l^2}$ for some $j=1, \dots , l-2$ is similarly handled, we omit the detail.

Secondly, we consider the case $Q_l = Q_{>N_l^2}$.
By H\"{o}lder's inequality, the Sobolev embedding $H^{\frac{d\delta}{2(2+\delta )}} (\R ^d) \hookrightarrow L^{2+\delta}(\R ^d)$, Lemma \ref{lem:Q}, and Corollary \ref{cor:biStv}, we get
\begin{align*}
& I_{\substack{N_{l-1} \sim N_l \gtrsim N_0, N_m \\ N_1 \le \dots \le N_l \\ N_{l+1} \le \dots \le N_m}} \\
& \lesssim
\sum _{\substack{N_0,N_1, \dots , N_m \in 2^{\N _0} \\ N_{l-1} \sim N_l \gtrsim N_0, N_m \\ N_1 \le \dots \le N_l \\ N_{l+1} \le \dots \le N_m}}
N_0^{s_c+1} \norm{\mathfrak{R}_0 v_0 \mathfrak{R}_{l-1} v}_{L^2_{t,x}} \left( \prod _{j=1}^{l-2} \norm{\mathfrak{R}_j v}_{L^{\infty}_{t,x}} \right) \norm{Q_{>N_l^2} P_{N_l} v}_{L^{2+\delta}_{t,x}} \prod _{k=l+1}^m \norm{\mathfrak{R}_k z}_{L^{\frac{2(m-l)(2+\delta )}{\delta}}_{t,x}} \\
& \lesssim
\sum _{\substack{N_0,N_1, \dots , N_m \in 2^{\N _0} \\ N_{l-1} \sim N_l \gtrsim N_0, N_m \\ N_1 \le \dots \le N_l \\ N_{l+1} \le \dots \le N_m}}
N_0^{s_c+1} N_0^{\frac{d}{2}-1} \left( \frac{N_0}{N_{l-1}} \right) ^{\frac{1}{2}-\delta} \left( \prod _{j=1}^{l-2} N_j^{\frac{d}{2}} \right) N_l^{-\frac{2}{2+\delta}+\frac{d\delta}{2(2+\delta )}} \norm{P_{N_0} v_0}_{V^2_{\Delta}} \\
& \hspace*{100pt} \times \left( \prod _{j=1}^l \norm{P_{N_j} v}_{V^2_{\Delta}} \right) \prod _{k=l+1}^m \norm{\mathfrak{R}_k z}_{L^{\frac{2(m-l)(2+\delta )}{\delta}}_{t,x}} \\
& \lesssim
\sum _{\substack{N_0,N_1, \dots , N_l \in 2^{\N _0} \\ N_{l-1} \sim N_l \gtrsim N_0, N_m \\ N_1 \le \dots \le N_l}}
N_0^{2s_c+\frac{1}{2}+\frac{1}{m-1}-\delta} \left( \prod _{j=1}^{l-2} N_j^{\frac{1}{m-1}} \right) N_l^{-2s_c-\frac{1}{2}-\frac{2}{2+\delta}+\frac{d\delta}{2(2+\delta )}+\delta} \norm{v}_{Y^{s_c}}^l R^{m-l} \\
& \lesssim
\sum _{N_l \in 2^{\N _0}}
N_l^{-\frac{m-l}{m-1}+\frac{(d+2)\delta}{2+\delta}} \norm{v}_{Y^{s_c}}^l R^{m-l} \\
& \lesssim
\norm{v}_{Y^{s_c}}^l R^{m-l}
\end{align*}
for $\omega \in E^m_R$.
Here, we have used the fact that $1 \le l \le m-1$ and $\delta >0$ is sufficiently small in the last inequality.

\textbf{Subcase 3-4:} $N_l \sim N_m \gtrsim N_0$

We further divide the proof into two subcases.

\textbf{Subsubcase 3-4-1:} $l=1$

Firstly, we consider the case $Q_0=Q_{>N_1^2}$.
By H\"{o}lder's inequality, the Sobolev embedding $H^{\frac{d\delta}{2(2+\delta )}} (\R ^d) \hookrightarrow L^{2+\delta}(\R ^d)$, Lemma \ref{lem:Q}, and Corollary \ref{cor:biStv}, we get
\begin{align*}
I_{\substack{N_1 \sim N_m \gtrsim N_0 \\ N_2 \le \dots \le N_m}}
& \lesssim
\sum _{\substack{N_0,N_1, \dots , N_m \in 2^{\N _0} \\ N_1 \sim N_m \gtrsim N_0 \\ N_2 \le \dots \le N_m}}
N_0^{s_c+1} \norm{Q_{>N_1^2} P_{N_0} v_0}_{L^{2+\delta}_{t,x}} \norm{\mathfrak{R}_1 v \mathfrak{R}_2z}_{L^2_{t,x}} \prod _{k=3}^m \norm{\mathfrak{R}_k z}_{L^{\frac{2(m-2)(2+\delta )}{\delta}}_{t,x}} \\
& \lesssim
\sum _{\substack{N_0,N_1, \dots , N_m \in 2^{\N _0} \\ N_1 \sim N_m \gtrsim N_0 \\ N_2 \le \dots \le N_m}}
N_0^{s_c+1} N_1^{-\frac{2}{2+\delta}} N_0^{\frac{d\delta}{2(2+\delta )}} N_2 ^{\frac{d}{2}-1} \left( \frac{N_2}{N_1} \right) ^{\frac{1}{2}-\delta} \norm{P_{N_0} v_0}_{V^2_{\Delta}} \norm{P_{N_1} v}_{V^2_{\Delta}} \norm{\phi ^{\omega}}_{L^2_x} \\
& \hspace*{200pt} \times \prod _{k=3}^m \norm{\mathfrak{R}_k z}_{L^{\frac{2(m-2)(2+\delta )}{\delta}}_{t,x}} \\
& \lesssim
\sum _{\substack{N_0,N_1,N_2 \in 2^{\N _0} \\ N_1 \gtrsim N_0, N_2}}
N_0^{s_c+1+\frac{d\delta}{2(2+\delta )}} N_1^{-s-s_c-\frac{1}{2}-\frac{2}{2+\delta}+\delta} N_2 ^{-s+\frac{d-1}{2}-\delta} \norm{v}_{Y^{s_c}} R^{m-1} \\
& \lesssim
\sum _{\substack{N_1,N_2 \in 2^{\N _0} \\ N_1 \gtrsim N_2}}
N_1^{-s-\frac{1}{2}+\frac{(d+2)\delta}{2+\delta}+\delta} N_2 ^{-s+\frac{d-1}{2}-\delta} \norm{v}_{Y^{s_c}} R^{m-1} \\
& \lesssim
\norm{v}_{Y^{s_c}} R^{m-1}
\end{align*}
for $\omega \in E^m_R$.
Here, we have used the fact that $s> \frac{1}{2}( \frac{d}{2}-1)$ and $\delta >0$ is sufficiently small in the last inequality.

Secondly, we consider the case $Q_1=Q_{>N_1^2}$.
By H\"{o}lder's inequality, the Sobolev embedding $H^{\frac{d\delta}{2(2+\delta )}} (\R ^d) \hookrightarrow L^{2+\delta}(\R ^d)$, Lemma \ref{lem:Q}, and Corollary \ref{cor:biStv}, we get
\begin{align*}
I_{\substack{N_1 \sim N_m \gtrsim N_0 \\ N_2 \le \dots \le N_m}}
& \lesssim
\sum _{\substack{N_0,N_1, \dots , N_m \in 2^{\N _0} \\ N_1 \sim N_m \gtrsim N_0 \\ N_2 \le \dots \le N_m}}
N_0^{s_c+1} \norm{\mathfrak{R}_0 v \mathfrak{R}_m z}_{L^2_{t,x}} \norm{Q_{>N_1^2} P_{N_1} v}_{L^{2+\delta}_{t,x}} \prod _{k=2}^{m-1} \norm{\mathfrak{R}_k z}_{L^{\frac{2(m-2)(2+\delta )}{\delta}}_{t,x}} \\
& \lesssim
\sum _{\substack{N_0,N_1, \dots , N_m \in 2^{\N _0} \\ N_1 \sim N_m \gtrsim N_0 \\ N_2 \le \dots \le N_m}}
N_0^{s_c+1} N_0^{\frac{d}{2}-1} \left( \frac{N_0}{N_m} \right) ^{\frac{1}{2}-\delta} N_1^{-\frac{2}{2+\delta}+\frac{d\delta}{2(2+\delta )}} \norm{P_{N_0} v_0}_{V^2_{\Delta}} \norm{P_{N_1} v}_{V^2_{\Delta}} \norm{\phi ^{\omega}}_{L^2_x} \\
& \hspace*{200pt} \times \prod _{k=2}^{m-1} \norm{\mathfrak{R}_k z}_{L^{\frac{2(m-2)(2+\delta )}{\delta}}_{t,x}} \\
& \lesssim
\sum _{\substack{N_0,N_1 \in 2^{\N _0} \\ N_1 \gtrsim N_0}}
N_0^{2s_c+\frac{1}{m-1}+\frac{1}{2}-\delta} N_1^{-s-s_c-\frac{3}{2}-\frac{(d+2)\delta}{2+\delta}+\delta} \norm{v}_{Y^{s_c}} R^{m-1} \\
& \lesssim
\norm{v}_{Y^{s_c}} R^{m-1}
\end{align*}
for $\omega \in E^m_R$.
Here, we have used the fact that $s> \frac{d}{2}-1$ and $\delta >0$ is sufficiently small in the last inequality.

\textbf{Subsubcase 3-4-2:} $l \ge 2$

Firstly, we consider the case $Q_0=Q_{>N_l^2}$.
By H\"{o}lder's inequality, the Sobolev embedding $H^{\frac{d\delta}{2(2+\delta )}} (\R ^d) \hookrightarrow L^{2+\delta}(\R ^d)$, Lemma \ref{lem:Q}, and Corollary \ref{cor:biStv}, we get
\begin{align*}
& I_{\substack{N_l \sim N_m \gtrsim N_0 \\ N_1 \le \dots \le N_l \\ N_{l+1} \le \dots \le N_m}} \\
& \lesssim
\sum _{\substack{N_0,N_1, \dots , N_m \in 2^{\N _0} \\ N_l \sim N_m \gtrsim N_0 \\ N_1 \le \dots \le N_l \\ N_{l+1} \le \dots \le N_m}}
N_0^{s_c+1} \norm{Q_{>N_l^2} P_{N_0} v_0}_{L^{2+\delta}_{t,x}} \left( \prod _{j=1}^{l-2} \norm{\mathfrak{R}_j v}_{L^{\infty}_{t,x}} \right) \norm{\mathfrak{R}_{l-1} v \mathfrak{R}_l v}_{L^2_{t,x}} \prod _{k=l+1}^m \norm{\mathfrak{R}_k z}_{L^{\frac{2(m-l)(2+\delta )}{\delta}}_{t,x}} \\
& \lesssim
\sum _{\substack{N_0,N_1, \dots , N_m \in 2^{\N _0} \\ N_l \sim N_m \gtrsim N_0 \\ N_1 \le \dots \le N_l \\ N_{l+1} \le \dots \le N_m}}
N_0^{s_c+1} N_l^{-\frac{2}{2+\delta}} N_0^{\frac{d\delta}{2(2+\delta )}} \left( \prod _{j=1}^{l-2} N_j^{\frac{d}{2}} \right) N_{l-1}^{\frac{d}{2}-1} \left( \frac{N_{l-1}}{N_l} \right) ^{\frac{1}{2}-\delta} \norm{P_{N_0} v_0}_{V^2_{\Delta}} \\
& \hspace*{100pt} \times \left( \prod _{j=1}^l \norm{P_{N_j} v}_{V^2_{\Delta}} \right) \prod _{k=l+1}^m \norm{\mathfrak{R}_k z}_{L^{\frac{2(m-l)(2+\delta )}{\delta}}_{t,x}} \\
& \lesssim
\sum _{\substack{N_0,N_1, \dots , N_l \in 2^{\N _0} \\ N_l \gtrsim N_0 \\ N_1 \le \dots \le N_l}}
N_0^{s_c+1+\frac{d\delta}{2(2+\delta )}} \left( \prod _{j=1}^{l-2} N_j^{\frac{1}{m-1}} \right) N_{l-1}^{\frac{1}{m-1}-\frac{1}{2}-\delta} N_l^{-s-s_c-\frac{1}{2}-\frac{2}{2+\delta}+\delta} \norm{v}_{Y^{s_c}}^l R^{m-l} \\
& \lesssim
\sum _{\substack{N_{l-1}, N_l \in 2^{\N _0} \\ N_l \ge N_{l-1}}}
N_{l-1}^{\frac{l-1}{m-1}-\frac{1}{2}-\delta} N_l^{-s-\frac{1}{2}+\frac{(d+2)\delta}{2+\delta}+\delta} \norm{v}_{Y^{s_c}}^l R^{m-l} \\
& \lesssim
\norm{v}_{Y^{s_c}}^l R^{m-l}
\end{align*}
for $\omega \in E^m_R$.
Here, we have used the fact that $s> \max (-\frac{1}{2},-\frac{1}{m-1})$ and $\delta >0$ is sufficiently small in the last inequality.

Secondly, we consider the case $Q_1=Q_{>N_l^2}$.
>From H\"{o}lder's inequality, the embeddings $H^{s_c}(\R ^d) \hookrightarrow L^{(m-1)d}(\R ^d)$, $H^{\frac{d\delta}{2(2+\delta )}} (\R ^d) \hookrightarrow L^{2+\delta}(\R ^d)$, Lemmas \ref{lem:Stv}, and \ref{lem:Q}, we have
\begin{align*}
& I_{\substack{N_l \sim N_m \gtrsim N_0 \\ N_1 \le \dots \le N_l \\ N_{l+1} \le \dots \le N_m}} \\
& \lesssim
\sum _{\substack{N_0,N_1, \dots , N_m \in 2^{\N _0} \\N_l \sim N_m \gtrsim N_0 \\ N_1 \le \dots \le N_l \\ N_{l+1} \le \dots \le N_m}}
N_0^{s_c+1} \norm{\mathfrak{R}_0 v_0}_{L^4_t L^{\frac{2d}{d-1}}} \norm{Q_{>N_0^2} P_{N_1} v}_{L^{2+\delta}_t L^{(m-1)d}_x} \left( \prod _{j=2}^{l-1} \norm{\mathfrak{R}_j v}_{L^{\infty}_t L^{(m-1)d}_x} \right) \\
& \hspace*{100pt} \times \norm{\mathfrak{R}_l v}_{L^4_t L^{\frac{2d}{d-1}}_x} \prod _{k=l+1}^m \norm{\mathfrak{R}_k z}_{L^{\frac{2(2+\delta)}{\delta}}_t L^{(m-1)d}_x} \\
& \lesssim
\sum _{\substack{N_0,N_1, \dots , N_m \in 2^{\N _0} \\N_l \sim N_m \gtrsim N_0 \\ N_1 \le \dots \le N_l \\ N_{l+1} \le \dots \le N_m}}
N_0^{s_c+1} N_0^{-\frac{2}{2+\delta}} \left( \prod _{j=1}^{l-1} N_j^{s_c} \right) \norm{P_{N_0} v_0}_{V^2_{\Delta}} \left( \prod _{j=1}^l \norm{P_{N_j} v}_{V^2_{\Delta}} \right) \prod _{k=l+1}^m \norm{\mathfrak{R}_k z}_{L^{\frac{2(2+\delta)}{\delta}}_t L^{(m-1)d}_x} \\
& \lesssim
\sum _{\substack{N_0,N_1, \dots , N_l \in 2^{\N _0} \\ N_l \gtrsim N_0 \\ N_1 \le \dots \le N_l}}
N_0^{s_c+\frac{\delta}{2+\delta}} N_l^{-s-s_c+\delta} \norm{v}_{Y^{s_c}}^l R^{m-l} \\
& \lesssim
\norm{v}_{Y^{s_c}}^l R^{m-l}
\end{align*}
for $\omega \in E^m_R$.
The case where $Q_j=Q_{>N_l^2}$ ($j=2, \dots ,l-1$) is similarly handled.

Thirdly, we consider the case $Q_l=Q_{>N_l^2}$.
>From H\"{o}lder's inequality, the embeddings $H^{s_c}(\R ^d) \hookrightarrow L^{(m-1)d}(\R ^d)$, $H^{\frac{d\delta}{2(2+\delta )}} (\R ^d) \hookrightarrow L^{2+\delta}(\R ^d)$, $H^{\frac{d}{2}-\frac{d}{d+\delta}}(\R ^d) \hookrightarrow L^{d+\delta} (\R ^d)$, Lemmas \ref{lem:Stv}, and \ref{lem:Q}, we have
\begin{align*}
& I_{\substack{N_l \sim N_m \gtrsim N_0 \\ N_1 \le \dots \le N_l \\ N_{l+1} \le \dots \le N_m}} \\
& \lesssim
\sum _{\substack{N_0,N_1, \dots , N_m \in 2^{\N _0} \\ N_l \sim N_m \gtrsim N_0 \\ N_1 \le \dots \le N_l \\ N_{l+1} \le \dots \le N_m}}
N_0^{s_c+1} \norm{\mathfrak{R}_0 v_0}_{L^4_t L^{\frac{2d}{d-1}}} \left( \prod _{j=1}^{l-2} \norm{\mathfrak{R}_j v}_{L^{\infty}_{t,x}} \right) \norm{\mathfrak{R}_{N_{l-1}} v}_{L^4_t L^{\frac{2d}{d-1}}_x} \norm{Q_{>N_l^2} P_{N_l} v}_{L^{2+\delta}_t L^{d+\delta}_x} \\
& \hspace*{100pt} \times \prod _{k=l+1}^m \norm{\mathfrak{R}_k z}_{L^{\frac{2(m-l)(2+\delta )}{\delta}}_t L^{\frac{d(m-l)(d+\delta)}{\delta}}_x} \\
& \lesssim
\sum _{\substack{N_0,N_1, \dots , N_m \in 2^{\N _0} \\ N_l \sim N_m \gtrsim N_0 \\ N_1 \le \dots \le N_l \\ N_{l+1} \le \dots \le N_m}}
N_0^{s_c+1} \left( \prod _{j=1}^{l-2} N_j^{\frac{d}{2}} \right) N_l^{-\frac{2}{2+\delta}+\frac{d}{2}-\frac{d}{d+\delta}} \norm{P_{N_0} v_0}_{V^2_{\Delta}} \left( \prod _{j=1}^l \norm{P_{N_j} v}_{V^2_{\Delta}} \right) \\
& \hspace*{100pt} \times \prod _{k=l+1}^m \norm{\mathfrak{R}_k z}_{L^{\frac{2(m-l)(2+\delta )}{\delta}}_t L^{\frac{d(m-l)(d+\delta)}{\delta}}_x} \\
& \lesssim
\sum _{\substack{N_0,N_1, \dots , N_l \in 2^{\N _0} \\ N_l \gtrsim N_0 \\ N_1 \le \dots \le N_l}}
N_0^{s_c+1} \left( \prod _{j=1}^{l-2} N_j^{\frac{1}{m-1}} \right) N_{l-1}^{-s_c} N_l^{-s-s_c-\frac{2}{2+\delta}+\frac{d}{2}-\frac{d}{d+\delta}} \norm{v}_{Y^{s_c}}^l R^{m-l} \\
& \lesssim
\sum _{\substack{N_{l-1},N_l \in 2^{\N _0} \\ N_{l-1} \le N_l}}
N_{l-1}^{-s_c+\frac{l-2}{m-1}} N_l^{-s+\frac{d}{2}-1+\frac{\delta}{2+\delta}+\frac{\delta}{d+\delta}} \norm{v}_{Y^{s_c}}^l R^{m-l} \\
& \lesssim
\norm{v}_{Y^{s_c}}^l R^{m-l}
\end{align*}
for $\omega \in E^m_R$.
Here, we have used the fact that $s> \max( \frac{d}{2}-1, - \frac{1}{m-1})$ and $\delta >0$ is sufficiently small in the last inequality.

\textbf{Subcase 3-5:} $N_{m-1} \sim N_m \gtrsim N_0, N_l$

We assume $l \le m-2$ otherwise it is reduced to the subcase 3-4.
Firstly, we consider the case $Q_0=Q_{>N_m^2}$.
>From H\"{o}lder's inequality, the embedding $H^{\frac{d\delta}{2(2+\delta )}} (\R ^d) \hookrightarrow L^{2+\delta}(\R ^d)$, Lemma \ref{lem:Q}, and Corollary \ref{cor:biStv}, we have
\begin{align*}
& I_{\substack{N_{m-1} \sim N_m \gtrsim N_0, N_l \\ N_1 \le \dots \le N_l \\ N_{l+1} \le \dots \le N_m}} \\
& \lesssim
\sum _{\substack{N_0,N_1, \dots , N_m \in 2^{\N _0} \\ N_{m-1} \sim N_m \gtrsim N_0, N_l \\ N_1 \le \dots \le N_l \\ N_{l+1} \le \dots \le N_m}}
N_0^{s_c+1} \norm{Q_{>N_m^2} P_{N_0} v_0}_{L^{2+\delta}_{t,x}} \left( \prod _{j=1}^{l-1} \norm{\mathfrak{R}_j v}_{L^{\infty}_{t,x}} \right) \norm{\mathfrak{R}_l v \mathfrak{R}_m z}_{L^2_{t,x}} \prod _{k=l+1}^{m-1} \norm{\mathfrak{R}_k z}_{L^{\frac{2(m-l-1)(2+\delta )}{\delta}}_{t,x
}} \\
& \lesssim
\sum _{\substack{N_0,N_1, \dots , N_m \in 2^{\N _0} \\ N_{m-1} \sim N_m \gtrsim N_0, N_l \\ N_1 \le \dots \le N_l \\ N_{l+1} \le \dots \le N_m}}
N_0^{s_c+1} N_m^{-\frac{2}{2+\delta}} N_0^{\frac{d\delta}{2(2+\delta )}} \left( \prod _{j=1}^{l-1} N_j^{\frac{d}{2}} \right) N_l^{\frac{d}{2}-1} \left( \frac{N_l}{N_m} \right) ^{\frac{1}{2}-\delta} \norm{P_{N_0} v_0}_{V^2_{\Delta}} \\
& \hspace*{100pt} \times \left( \prod _{j=1}^l \norm{P_{N_j} v}_{V^2_{\Delta}} \right) \norm{\phi ^{\omega}}_{L^2_x} \prod _{k=l+1}^{m-1} \norm{\mathfrak{R}_k z}_{L^{\frac{2(m-l-1)(2+\delta )}{\delta}}_{t,x
}} \\
& \lesssim
\sum _{\substack{N_0,N_1, \dots , N_m \in 2^{\N _0} \\ N_{m-1} \sim N_m \gtrsim N_0, N_l \\ N_1 \le \dots \le N_l \\ N_{l+1} \le \dots \le N_m}}
N_0^{s_c+1+\frac{d\delta}{2(2+\delta )}} \left( \prod _{j=1}^{l-1} N_j^{\frac{1}{m-1}} \right) N_l^{\frac{1}{m-1}-\frac{1}{2}-\delta} N_m^{-2s-\frac{1}{2}-\frac{2}{2+\delta}+\delta} \norm{v}_{Y^{s_c}}^l R^{m-l} \\
& \lesssim
\sum _{\substack{N_l,N_m \in 2^{\N _0} \\ N_m \gtrsim N_l}}
N_l^{\frac{l}{m-1}-\frac{1}{2}-\delta} N_m^{-2s+s_c-\frac{1}{2}+\frac{(d+2)\delta}{2+\delta}+\delta} \norm{v}_{Y^{s_c}}^l R^{m-l} \\
& \lesssim \norm{v}_{Y^{s_c}}^l R^{m-l}
\end{align*}
for $\omega \in E^m_R$.
Here, we have used the fact that $s> \frac{s_c}{2}$ and $\delta >0$ is sufficiently small in the last inequality.

Secondly, we consider the case $Q_1=Q_{>N_m^2}$.
>From H\"{o}lder's inequality, the embeddings $H^{s_c}(\R ^d) \hookrightarrow L^{(m-1)d}(\R ^d)$, $H^{\frac{d\delta}{2(2+\delta )}} (\R ^d) \hookrightarrow L^{2+\delta}(\R ^d)$, Lemmas \ref{lem:Stv}, and \ref{lem:Q}, we have
\begin{align*}
& I_{\substack{N_{m-1} \sim N_m \gtrsim N_0, N_l \\ N_1 \le \dots \le N_l \\ N_{l+1} \le \dots \le N_m}} \\
& \lesssim
\sum _{\substack{N_0,N_1, \dots , N_m \in 2^{\N _0} \\ N_{m-1} \sim N_m \gtrsim N_0, N_l \\ N_1 \le \dots \le N_l \\ N_{l+1} \le \dots \le N_m}}
N_0^{s_c+1} \norm{\mathfrak{R}_0 v_0}_{L^4_t L^{\frac{2d}{d-1}}} \norm{Q_{>N_m^2} P_{N_1} v}_{L^{2+\delta}_t L^{(m-1)d}_x} \left( \prod _{j=2}^l \norm{\mathfrak{R}_j v}_{L^{\infty}_t L^{(m-1)d}_x} \right) \\
& \hspace*{100pt} \times \left( \prod _{k=l+1}^{m-1} \norm{\mathfrak{R}_k z}_{L^{\frac{2(m-l-1)(2+\delta )}{\delta}}_t L^{(m-1)d}_x} \right) \norm{\mathfrak{R}_{N_m} z}_{L^4_t L^{\frac{2d}{d-1}}_x}  \\
& \lesssim
\sum _{\substack{N_0,N_1, \dots , N_m \in 2^{\N _0} \\ N_{m-1} \sim N_m \gtrsim N_0, N_l \\ N_1 \le \dots \le N_l \\ N_{l+1} \le \dots \le N_m}}
N_0^{s_c+1} N_m^{-\frac{2}{2+\delta}} \left( \prod _{j=1}^l N_j^{s_c} \right) \norm{P_{N_0} v_0}_{V^2_{\Delta}} \left( \prod _{j=1}^l \norm{P_{N_j} v}_{V^2_{\Delta}} \right) \\
& \hspace*{100pt} \times \left( \prod _{k=l+1}^{m-1} \norm{\mathfrak{R}_k z}_{L^{\frac{2(m-l-1)(2+\delta )}{\delta}}_t L^{(m-1)d}_x} \right) \norm{\mathfrak{R}_{N_m} z}_{L^4_t L^{\frac{2d}{d-1}}_x}  \\
& \lesssim
\sum _{\substack{N_0,N_m \in 2^{\N _0} \\ N_m \gtrsim N_0}}
N_0^{s_c+1} N_m^{-2s-\frac{2}{2+\delta}+\delta} \norm{v}_{Y^{s_c}}^l R^{m-l} \\
& \lesssim
\norm{v}_{Y^{s_c}}^l R^{m-l}
\end{align*}
for $\omega \in E^m_R$.
Here, we have used the fact that $s> \frac{s_c}{2}$ and $\delta >0$ is sufficiently small in the last inequality.
\end{proof}

\section{Proof of Main results} \label{S:proof_WP}

By a standard contraction argument, we deduce Theorems \ref{thm:LWP} and \ref{thm:GWP} from Lemmas \ref{lem:nonestl} and \ref{lem:nonest} respectively.
We give a rough outline (see \cite{BOP2} and \cite{HO15}).

\begin{proof}[Proof of Theorem \ref{thm:LWP}]
Let $\eta$ be small enough such that
\begin{equation} \label{cond:eta1}
2 C_1' \eta ^{m-1} \le 1 , \quad 2 C_2' \eta ^{m-1} \le \frac{1}{4},
\end{equation}
where $C_1'$ and $C_2'$ are the constants as in \eqref{eq:nonestl1} and \eqref{eq:nonestl2}.
For any $R>0$, we choose $T=T(R)$ such that
\[
T := \min \left( \frac{\eta}{2 C_1' R^{m}} , \frac{1}{4C_2' R^{m-1}} \right) ^{\frac{100}{\delta}} .
\]
Then, by Lemma \ref{lem:nonestl} and $Z^{s_c}_T \hookrightarrow Y^{s_c}_T$, the mapping $v \mapsto \Gamma [\mathcal{N}_{m} (v+z)]$ is a contraction on the ball $B_{\eta}$ defined by
\[
B_{\eta} := \{ u \in Z^{s_c}_T : \norm{u}_{Z^{s_c}_T} \le \eta \}
\]
outside a set of probability $ \le C \exp ( - c \frac{1}{T^{\gamma} \norm{\phi}_{H^s}})$ for some $\gamma > 0$, which leads the almost sure local in time well-posedness.
\end{proof}

\begin{proof}[Proof of Theorem \ref{thm:GWP}]

The same argument as in Corollary 3.4 in \cite{HHK} or Appendix C in \cite{IKO} yields that \eqref{eq:nonest1}, \eqref{eq:nonest2} with $T=\infty$ hold.
Let $\eta >0$ be sufficiently small such that
\begin{equation} \label{cond:eta2}
2C_1 \eta  ^{m-1} \le 1 ,\quad
3 C_2 \eta ^{m-1} \le \frac{1}{2} ,
\end{equation}
where $C_1$ and $C_2$ are the constants as in \eqref{eq:nonest1} and \eqref{eq:nonest2}.
Then, by Lemma \ref{lem:nonest} with $T=\infty$ and $Z^{s_c}_{\infty} \hookrightarrow Y^{s_c}_{\infty}$, the mapping $v \mapsto \Gamma [\mathcal{N}_{m} (v+z)]$ is a contraction on the ball $B_{R}$ defined by
\[
B_{\eta} := \{ u \in Z^{s_c}_{\infty} : \norm{u}_{Z^{s_c}_{\infty}} \le \eta \}
\]
outside a set of probability $\le C \exp ( - c \frac{\eta ^2}{\norm{\phi}_{H^s}})$.
We thus obtain the almost sure global in time well-posedness.

The scattering follows from \eqref{eq:nonest1} and the property of the $U^2$-space.
\end{proof}

\section*{Acknowledgment}

The work of the second author was partially supported by JSPS KAKENHI Grant number 26887017.

\end{document}